\let\oldtocsection=\tocsection
\let\oldtocsubsection=\tocsubsection
\let\oldtocsubsubsection=\tocsubsubsection
\renewcommand{\tocsection}[2]{\hspace{0em}\oldtocsection{#1}{#2}\bfseries}
\renewcommand{\tocsubsection}[2]{\hspace{1.8em}\oldtocsubsection{#1}{#2}}
\renewcommand{\tocsubsubsection}[2]{\hspace{4.4em}\oldtocsubsubsection{#1}{#2}}
\renewcommand\subsection{\@startsection{subsection}{2}%
  \z@{-.5\linespacing\@plus-.7\linespacing}{.5\linespacing}%
  {\normalfont\scshape}}
\renewcommand\subsubsection{\@startsection{subsubsection}{3}%
  \z@{.5\linespacing\@plus.7\linespacing}{.5\linespacing}%
  {\normalfont\scshape}}
\newtheorem{theorem}{Theorem}[section]
\newtheorem{corollary}[theorem]{Corollary}
\newtheorem{lemma}[theorem]{Lemma}
\newtheorem{Fact}[theorem]{Fact}
\newtheorem{claim}[theorem]{Claim}
\theoremstyle{definition}
\newtheorem{definition}[theorem]{Definition}
\newtheorem{remark}[theorem]{Remark}
\newcommand{\dom}[1]{\ensuremath{\mathrm{dom}}(#1)}
\newcommand{\power}{\ensuremath{\mathscr{P}}}
\newcommand{\set}[2]{\ensuremath{\{#1 \,|\, #2 \}}}
\newcommand{\seq}[2]{\ensuremath{\langle #1 \,|\, #2 \rangle}}
\renewcommand{\iff}{\leftrightarrow}
\newcommand{\el}{\prec}
\newcommand{\sub}{\subseteq}
\newcommand{\la}{\langle}
\newcommand{\ra}{\rangle}
\newcommand{\bb}{\mathbb}
\newcommand{\beq}{\begin{equation}}
\newcommand{\eeq}{\end{equation}}
\newcommand{\brm}{\begin{remark}\begin{rm}}
\newcommand{\erm}{\end{rm}\end{remark}}
\newcommand{\mx}{\mathrm}
\newcommand{\bce}{\begin{compactenum}}
\newcommand{\ece}{\end{compactenum}}
\newcommand{\cf}{\mathrm{cf}}
\newcommand{\Add}{\mathrm{Add}}
\newcommand{\Q}{\bb{Q}}
\renewcommand{\P}{\bb{P}}
\newcommand{\M}{\bb{M}}
\newcommand{\x}{\times}
\newcommand{\TP}{{\sf TP}}
\newcommand{\ISP}{{\sf ISP}}
\newcommand{\ZFC}{\sf ZFC}
\newcommand{\CH}{\sf CH}
\newcommand{\PFA}{\sf PFA}
\newcommand{\MA}{\sf MA}
\newcommand{\wKH}{\sf wKH}
\newcommand{\GMP}{\mathsf{GMP}}
\newcommand{\KH}{\mathsf{KH}}
\newcommand{\CC}{\mathsf{CC}}
\newcommand{\RR}{\mathcal{R}}
\newcommand{\se}[1]{\left\{#1\right\}}
\newcommand{\seqv}[1]{\left\langle #1\right\rangle}
\newcommand{\ST}{S(\dot{T})}
\begin{document}

\title[Indestructibility of some compactness principles \ldots]{Indestructibility of some compactness principles over models of $\PFA$}

\author{Radek Honzik}
\address[Honzik]{
Charles University, Department of Logic,
Celetn{\' a} 20, Prague~1, 
116 42, Czech Republic
}
\email{radek.honzik@ff.cuni.cz}
\urladdr{logika.ff.cuni.cz/radek}

\author{Chris Lambie-Hanson}
\address[Lambie-Hanson]{
Institute of Mathematics, 
Czech Academy of Sciences, 
{\v Z}itn{\'a} 25, Prague 1, 
115 67, Czech Republic
}
\email{lambiehanson@math.cas.cz}
\urladdr{https://users.math.cas.cz/~lambiehanson/}

\author{{\v S}{\'a}rka Stejskalov{\'a}}
\address[Stejskalov{\'a}]{
Charles University, Department of Logic,
Celetn{\' a} 20, Prague~1, 
116 42, Czech Republic
}
\email{sarka.stejskalova@ff.cuni.cz}
\urladdr{logika.ff.cuni.cz/sarka}

\address{Institute of Mathematics, Czech Academy of Sciences, {\v Z}itn{\'a} 25, Prague 1, 115 67, Czech Republic}

\thanks{
R.~Honzik and {\v S}.~Stejskalov{\'a} were supported by FWF/GA{\v C}R grant \emph{Compactness principles and combinatorics} (19-29633L)}

\begin{abstract}
We show that $\PFA$ (Proper Forcing Axiom) implies that adding any number of Cohen subsets of $\omega$ will not add an $\omega_2$-Aronszajn tree or a weak $\omega_1$-Kurepa tree, and moreover no $\sigma$-centered forcing can add a weak $\omega_1$-Kurepa tree (a tree of height and size $\omega_1$ with at least $\omega_2$ cofinal branches). This partially answers an open problem whether ccc forcings can add $\omega_2$-Aronszajn or $\omega_1$-Kurepa trees.

We actually prove more: We show that a consequence of $\PFA$, namely the \emph{guessing model principle}, $\GMP$, which is equivalent to the \emph{ineffable slender tree property}, $\ISP$, is preserved by adding any number of Cohen subsets of $\omega$. And moreover, $\GMP$ implies that no $\sigma$-centered forcing can add a weak $\omega_1$-Kurepa tree (see Section \ref{sec:guess_def} for definitions).

For more generality, we study the principle $\GMP$ at an arbitrary regular cardinal $\kappa = \kappa^{<\kappa}$ (we denote this principle $\GMP_{\kappa^{++}}$), and as an application we show that there is a model in which there are no weak $\aleph_{\omega+1}$-Kurepa trees and no $\aleph_{\omega+2}$-Aronszajn trees.
\end{abstract}

\keywords{$\PFA$; the tree property; weak Kurepa Hypothesis; indestructibility; guessing models}
	\subjclass[2010]{03E55, 03E35}
	\maketitle

\tableofcontents

\section{Introduction}

It has been a question of some interest whether ``small'' forcings (either in terms of size or chain condition) can add ``large'' trees. Depending on the meaning of ``small'' and ``large'', there are both negative and positive results. Here are some examples for the positive answer:
 
\begin{itemize}
\item By a result of Shelah, a single Cohen subset of $\omega$ adds an $\omega_1$-Suslin tree. 
\item Rinot showed in \cite{Rinot:Addsp.Tree} that a cofinality preserving forcing of size $\omega_3$ can add a special Aronszajn tree at $\aleph_{\omega_1+1}$ (this requires large cardinals).
\item Jin and Shelah \cite{JS:Kurepa} showed that an $\omega_1$-distributive forcing of size $\omega_1$ can add an $\omega_1$-Kurepa tree. 
\end{itemize}

However, if we take ``small'' to be countable or just ccc, and ``large'' to mean an $\omega_2$-Aronszajn tree or a (weak) $\omega_1$-Kurepa tree, the question is open. Even the simplest question whether a single Cohen subset of $\omega$ can add an $\omega_2$-Aronszajn tree or an $\omega_1$-Kurepa tree over some model of $\ZFC$ remains unanswered. Similar cases can be considered at larger cardinals as well (see for instance \cite{HM:dest} for $\aleph_{\omega+1}$).

For the negative answer, more results are known. The main reason is that many principles inherited from large cardinals (``compactness principles'') tend to prohibit the existence of certain trees, and by starting with a carefully chosen model where such principles hold, one can sometimes show that no trees of the given type are added by small forcings. But as an answer to the question above, this approach may appear unconvincing, inasmuch as it depends on the particular model in question (see a brief summary of these results in Section \ref{sec:back}).

A more convincing approach is to find an assumption $\varphi$ such that $\ZFC + \varphi$ prohibits the existence of certain trees, and $\varphi$ itself is always preserved by small forcings, not just over some particular model under consideration.\footnote{$\varphi$ can be just the sentence that there are no trees of the given type, but in the known examples, a stronger principle is usually required.} For instance Chang's Conjecture, $\CC$, plays this role for $\omega_1$-Kurepa trees: by a well-known theorem, $\CC$ is preserved by all ccc forcings, and hence no ccc forcing can add an $\omega_1$-Kurepa tree over any model of $\CC$. If we subscribe to $\ZFC + \CC$, then we conclude that no ccc forcing can add an $\omega_1$-Kurepa tree.

In this paper we show that the \emph{guessing model principle}, $\GMP$,\footnote{We generalize this principle to larger cardinals, so we will have $\GMP = \GMP_{\omega_2}$ in what follows.} which is a consequence of $\PFA$, plays a similar role for $\omega_2$-Aronszajn trees and weak $\omega_1$-Kurepa trees (see Section \ref{sec:back} for definitions, and Lemma \ref{lm:GMPimplies}, Corollary \ref{cor:GMP} and Theorem \ref{th:2} for proofs): 

\medskip

\textbf{Theorem.} {\it $\ZFC + \GMP$ proves that adding any number of Cohen subsets of $\omega$ will not add an $\omega_2$-Aronszajn tree or a weak $\omega_1$-Kurepa tree. Moreover, it 
proves that no $\sigma$-centered forcing can add a weak $\omega_1$-Kurepa tree.}

\medskip

$\GMP$ is a certain compactness principle introduced by Viale and Weiss in \cite{VW:PFA}. It follows from $\PFA$, but it is strictly weaker: for instance, it does not put any bound on the value of $2^\omega$, apart from contradicting $\CH$. This principle can either be formulated in terms of guessing models, or equivalently in terms of slender lists (generalizations of trees), and is known to capture the ``combinatorial core'' of supercompactness; see \cite{Weiss:gentree} and \cite{VW:PFA} for more  details.  For our proof, we find it more convenient to work with guessing models.

The paper is structured as follows. In Section \ref{sec:back} we review the basic definitions and provide a brief survey of results related to preservation of the tree property (i.e.\ not adding Aronszajn trees) or the negation of the weak Kurepa Hypothesis (i.e.\ not adding weak Kurepa trees). 

In Section \ref{sec:GMP}, generalizing the principle $\GMP$ from \cite{VW:PFA}, we define for a regular $\kappa^{<\kappa} = \kappa$ a principle $\GMP_{\kappa^{++}}$ (with $\GMP = \GMP_{\omega_2}$) and show that it is preserved by adding any number of Cohen subsets of $\kappa$ (Corollary \ref{cor:GMP}). Since $\GMP_{\kappa^{++}}$ implies that there are no $\kappa^{++}$-Aronszajn trees and no weak $\kappa^+$-Kurepa trees (Lemma \ref{lm:GMPimplies}), we obtain the desired result. 

In Section \ref{sec:centered}, Theorem \ref{th:2}, we prove a slightly stronger result that over models of $\GMP_{\kappa^{++}}$, no $\kappa^+$-centered forcing (see Definition \ref{def:centered}) can add a weak $\kappa^+$-Kurepa tree.

In Section \ref{sec:appl}, we give an application of this result, showing that starting with some large cardinals there is a model in which there are no weak $\aleph_{\omega+1}$-Kurepa trees and no $\aleph_{\omega+2}$-Aronszajn trees.

In Section \ref{sec:open} we state some open questions.

\subsection{Preliminaries}\label{sec:back}

Suppose $\lambda$ is a regular cardinal. We say that the \emph{tree property} at $\lambda$, $\TP(\lambda)$, holds if every $\lambda$-tree has a cofinal branch; equivalently, there are no $\lambda$-Aronszajn trees. We say that the \emph{Kurepa Hypothesis} at $\lambda$, $\KH(\lambda)$, holds if there is a $\lambda$-tree with at least $\lambda^+$-many cofinal branches (we call such a tree a $\lambda$-Kurepa tree); we say that the \emph{weak Kurepa Hypothesis} at $\lambda$, $\wKH(\lambda)$,  holds if there is a tree of height and size $\lambda$ with at least $\lambda^+$-many cofinal branches (we call such a tree a weak $\lambda$-Kurepa tree). A $\lambda$-Aronszajn tree is an incompact object because it has chains of every size $<\lambda$, but no chains of size $\lambda$; similarly a (weak) $\lambda$-Kurepa tree is an incompact object because every level of the tree has size $<\lambda$ (or $\le \lambda)$, yet there are $\lambda^+$-many cofinal branches. 

It is known that together with inaccessibility, these properties are related to large cardinals (see for instance Devlin \cite{DEVbook}):

\begin{Fact}
\begin{enumerate}[(i)] Suppose $\lambda$ is an inaccessible cardinal.
\item $\lambda$ is weakly compact if and only if $\TP(\lambda)$.
\item If $\lambda$ is ineffable, then $\neg \KH(\lambda)$.\footnote{If $\lambda$ is inaccessible, we say that $T$ is a $\lambda$-tree if for every $\omega \le \alpha < \lambda$, $|T_\alpha|\le |\alpha|$ (sometimes such trees are called \emph{slim} $\lambda$-trees). This prevents the full binary tree $2^{<\lambda}$ from being a witness for $\KH(\lambda)$. However $\wKH(\lambda)$ is always true for an inaccessible $\lambda$, with $2^{<\lambda}$ being a witness.} Moreover, if $V=L$, then the converse is true as well.
\end{enumerate}
\end{Fact}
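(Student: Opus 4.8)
These are all classical (see \cite{DEVbook}); I indicate the plan and where the real work lies. For (i) I would prove the two implications separately. ``$\lambda$ weakly compact $\Rightarrow\TP(\lambda)$'' follows from the elementary-embedding characterisation of weak compactness: given a $\lambda$-tree $T$ (whose underlying set we may take to be a subset of $\lambda$), fix a transitive model $M$ of a large enough fragment of $\ZFC$ with $|M|=\lambda$, ${}^{<\lambda}M\subseteq M$ and $T,\lambda\in M$, together with an elementary embedding $j\colon M\to N$ into a transitive $N$ with critical point $\lambda$; since $j\restriction V_\lambda$ is the identity, $j(T)$ agrees with $T$ on all levels $<\lambda$ while having height $j(\lambda)>\lambda$, so the set of $<_{j(T)}$-predecessors of any node of $j(T)$ on level $\lambda$ is a cofinal branch of $T$. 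For the converse, that $\lambda$ inaccessible with $\TP(\lambda)$ is weakly compact, I would verify the partition relation $\lambda\to(\lambda)^2_2$ by the classical Sierpi\'nski-tree argument: given $c\colon[\lambda]^2\to 2$, form the $\lambda$-tree $\mathcal S$ consisting of the functions $\bigl(\xi\mapsto c(\xi,\beta)\bigr)\restriction\alpha$ for $\alpha\le\beta<\lambda$, ordered by end-extension (inaccessibility gives $|\mathcal S_\alpha|\le 2^{|\alpha|}<\lambda$); apply $\TP(\lambda)$ to get a cofinal branch $b\in{}^\lambda 2$ with each $b\restriction\alpha$ of that form, recursively build an increasing sequence $\langle\beta_\nu:\nu<\lambda\rangle$ with $c(\beta_\mu,\beta_\nu)=b(\beta_\mu)$ for $\mu<\nu$, and restrict to a size-$\lambda$ index set on which $b$ is constant to obtain a homogeneous set of size $\lambda$.

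For the forward direction of (ii), ``$\lambda$ ineffable $\Rightarrow\neg \KH(\lambda)$'' (the Jensen--Kunen theorem), I would argue by contradiction from a slim $\lambda$-Kurepa tree $T$. Re-presenting $T$ on underlying set $\lambda$ so that $T\restriction\delta\subseteq\delta$ for $\delta$ in a club $C$, one reads off more than $\lambda$ distinct subsets of $\lambda$ whose level-$\delta$ traces number at most $|\delta|$ for $\delta\in C$. The plan is then to manufacture from this data a $\lambda$-list $\langle A_\delta:\delta<\lambda\rangle$ (with $A_\delta\subseteq\delta$) that no single $A\subseteq\lambda$ guesses on a stationary set, directly contradicting the ineffability of $\lambda$. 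The delicate point --- and the step I expect to be the main obstacle --- is that by Fodor's lemma such a list cannot fail to be guessed on \emph{every} pair of levels, so the $A_\delta$ must be designed to exploit the gap between the $\ge\lambda^+$ branches and the $\le|\delta|$-many traces in a way that defeats every stationary set while necessarily remaining coherent on some non-stationary ones; here I would follow the construction in \cite{DEVbook}.

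For the ``moreover'' clause of (ii) --- in $L$, an inaccessible non-ineffable $\lambda$ carries a $\lambda$-Kurepa tree --- I would lean on Jensen's fine-structure theory. From the failure of ineffability one has in $L$ a sequence witnessing it, together with $\diamondsuit_\lambda$ (valid in $L$); one then constructs a slim $\lambda$-tree by recursion on levels, using condensation to arrange that the branches added realise all the potential guesses coded by a $\diamondsuit_\lambda$-sequence, which forces the tree to have at least $\lambda^+$ cofinal branches. The fine-structural inputs ($\diamondsuit_\lambda$ in $L$, condensation) are what I would cite rather than reprove.
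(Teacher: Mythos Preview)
The paper does not give a proof of this Fact; it is stated as a classical result with a pointer to Devlin's \cite{DEVbook}, so there is nothing to compare your argument against. Your sketch follows the standard textbook routes and is essentially correct: the embedding argument for (i) forward and the Sierpi\'nski-tree derivation of $\lambda\to(\lambda)^2_2$ for (i) backward are both fine, and for (ii) you rightly flag that the real content (building the non-guessed list from a slim Kurepa tree, and the fine-structural converse in $L$) is what one cites rather than reproves here.
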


Notice that the characterization of weak compactness by $\TP$ is provable in $\ZFC$, while the characterization of ineffability by $\neg \KH$ requires $V = L$ (to our knowledge, it is open whether the assumption $V =L$ can be removed).

Unlike the inaccessibility of $\lambda$, the principles $\TP(\lambda)$ and $\neg \KH(\lambda)$ are more robust, and may be viewed as the ``combinatorial core'' of the respective large cardinal notions. For instance adding $\lambda$ many Cohen subsets of $\omega$ to an ineffable cardinal $\lambda$ will destroy the inaccessibility of $\lambda$, but will not add any $\lambda$-Aronszajn trees or $\lambda$-Kurepa trees.

Silver in \cite{Silver:KH} was the first one to show that the non-existence of Kurepa trees can also hold on a successor cardinal: using the Levy collapse of an inaccessible, he showed that $\neg \KH(\omega_1)$ is consistent (with $\CH$). Soon afterwards, Mitchell, in \cite{M:tree}, devised a different collapse which simultaneously adds subsets of $\omega$, and showed that $\TP(\omega_2)$ and $\neg \wKH(\omega_1)$ are consistent; see Abraham's \cite{ABR:tree} for the now-standard presentation of the forcing. We shall denote the forcing devised by Mitchell by $\M(\kappa,\lambda)$, where it is understood that $\kappa$ is a regular cardinal satisfying $\kappa^{<\kappa} = \kappa$ and $\lambda$ is (at least) an inaccessible cardinal. $\M(\kappa,\lambda)$ collapses cardinals in the open interval $(\kappa,\lambda)$ and turns $\lambda$ into $\kappa^{++}$. By \cite{M:tree} or \cite{ABR:tree} if $\lambda$ is weakly compact, then $\TP(\kappa^{++})$ is true in $V[\M(\kappa,\lambda)]$. A similar argument shows that if $\lambda$ is inaccessible, then $\neg \KH(\kappa^{+})$ and in fact $\neg \wKH(\kappa^{+})$ hold in $V[\M(\kappa,\lambda)]$.\footnote{Notice the apparent disparity between the parameters in these two principles after the collapse: while the tree property is still considered at $\lambda$, the negation of the (weak) Kurepa hypothesis has $\kappa^+$ as the parameter (the large cardinal $\lambda$ is used to control the number of cofinal branches of trees of size $\kappa^+$).} 

With the discovery that many compactness principles can hold at successor cardinals, it was natural to inquire which forcing notions can destroy them and which will preserve them: the motivation being a general interest, and also an interest in developing a technical tool for forcing constructions. Todorcevic showed the compatibility of $\MA_{\omega_1}$ with $\neg \wKH(\omega_1)$ in \cite{T:wKH}, and essentially proved that over the Mitchell model $V[\M(\omega,\lambda)]$, $\lambda$ inaccessible, a finite support iteration of length $\omega_2$ of ccc forcings of size at most $\omega_1$ which do not add cofinal branches to $\omega_1$-Suslin trees does not add weak $\omega_1$-Kurepa trees; by Remark 1.7 in the same paper, it follows that the same iteration does not add $\omega_2$-Aronszajn trees over $V[\M(\omega,\lambda)]$, $\lambda$ weakly compact. Unger studied the indestructibility over the Mitchell model explicitly in his \cite{UNGER:1}, and further results appeared in \cite{HS:ind} and \cite{HS:u}.  Apart from Todorcevic's \cite{T:wKH}, the preservation of $\neg \wKH(\omega_1)$ over the Mitchell model has not been studied explicitly,\footnote{There is a preservation result for $\neg \KH(\omega_1)$ over the Levy collapse, see \cite{JS:k}. This however does not fit with our topic here because in that model $\CH$ is true, so necessarily both $\TP(\omega_2)$ and $\neg \wKH(\omega_1)$ fail.} but it is likely that the analogues of the results for the tree property obtained in \cite{HS:ind} also apply to the negation of the weak Kurepa Hypothesis. One can extend this line of inquiry to consider other variants of Mitchell forcing, and also other methods for obtaining $\TP(\omega_2)$ and $\neg \wKH(\omega_1)$, for instance the Sacks forcing (see \cite{KANAMORIperfect}).

In all these examples, however, the question of preservation of the compactness principles is asked over specific models, and the strategy of proofs follows the same pattern: Since we know the ``history'' of the cardinal $\lambda$ in our model, we can return back to the ground model where $\lambda$ is a large cardinal, and we can fix an appropriate elementary embedding and do a lifting argument. For instance to argue that $\P$ preserves $\TP(\omega_2)$ over the model $V[\M(\omega,\lambda)]$, the typical argument is to lift an elementary embedding with critical point $\lambda$ to the forcing $\M(\omega,\lambda) *\dot{\P}$. But what if do not have an elementary embedding, and all we know is that $\TP(\omega_2)$ or $\neg \wKH(\omega_1)$ are true: can we say something about the preservation?

This leads to the question of considering preservation of compactness principles over theories extending $\ZFC$, not just models. Here are some known results in this direction (see Definition \ref{def:centered} for the definition of $\kappa^+$-centeredness): 

\begin{itemize}
\item Chang's Conjecture is preserved by all ccc forcings; hence, over models of Chang's Conjecture, so is its consequence $\neg \KH(\omega_1)$.
\item Foreman showed in \cite{Foreman:sat} that $\kappa^{++}$-saturated ideals over $\kappa^{+}$, $\kappa$ regular, are preserved by $\kappa^+$-centered forcing notions (in the sense that they generate saturated ideals in the extension).
\item Gitik and Krueger showed in \cite{GK:a} that the negation of the approachability property at $\kappa^{++}$, $\kappa$ regular, is preserved by all $\kappa^+$-centered forcings. 
\item The first and the third author of the present paper showed in \cite{HS:u} that stationary reflection at $\kappa^+$, $\kappa$ regular, is preserved by all $\kappa$-cc forcing notions. They further showed in \cite{HS:u} that if $\kappa^{<\kappa}$, then club stationary reflection at $\kappa^{++}$ is preserved by Cohen forcing at $\kappa$ and Prikry forcing at $\kappa$.\footnote{This preservation result has been recently extended to all $\kappa^+$-linked forcings in \cite{TS:CSR} (being $\kappa^+$-linked is slightly weaker than $\kappa^+$-centered).}
\end{itemize}

We extend this list in this paper by showing (see Definition \ref{guessing_model_def} and Corollary \ref{cor:GMP}):

\begin{itemize}
\item $\GMP_{\kappa^{++}}$ is preserved by adding any number of Cohen subsets of $\kappa$, and hence over models of $\GMP_{\kappa^{++}}$, so are its consequences $\TP(\kappa^{++})$ and $\neg \wKH(\kappa^+)$.
\end{itemize}

\section{Preservation of the Guessing Model Principle by Cohen forcing}\label{sec:GMP}

\subsection{Guessing models}\label{sec:guess_def}

In \cite{Weiss:gentree}, building on work of Jech \cite{jech:tree} and Magidor \cite{Magidor:supercompact} providing combinatorial characterizations of strongly compact and supercompact cardinals, Weiss introduced the notion of a \emph{slender $\power_\kappa (\lambda)$-list} and used this to formulate a powerful compactness principle called the \emph{ineffable slender tree property}, $\ISP_\kappa$. For inaccessible $\kappa$, $\ISP_\kappa$ characterizes supercompactness and can be seen as capturing the combinatorial core of
supercompactness, but $\ISP_\kappa$ can also consistently hold at an accessible $\kappa$. Viale and Weiss \cite{VW:PFA} proved that $\PFA$ implies $\ISP_{\omega_2}$, and moreover provided an equivalent characterization of $\ISP_{\omega_2}$ in terms of \emph{guessing models}.

In this paper, we work with this equivalent characterization in terms of guessing models and its generalizations. We state the definition in the form relevant for us; even more general versions are possible (see Remark \ref{rm:more}). Recall the notation $\power_\mu(x)$ which denotes the set of all subsets of $x$ of size $<\mu$.

\begin{definition} \label{guessing_model_def}
Let $\kappa^{<\kappa} = \kappa < \kappa^{++} \le \theta$ be regular cardinals. Let $M \el H(\theta)$ be an elementary submodel of size $\kappa^+$ satisfying ${}^{<\kappa}M \sub M$.
  \begin{enumerate}
    \item Given a set $x \in M$, and a subset $d \subseteq x$, we say that 
    \begin{enumerate}
      \item $d$ is \emph{$(\kappa^+, M)$-approximated} if, for every $z \in M \cap \power_{\kappa^+}(M)$, 
      we have $d \cap z \in M$;
      \item $d$ is \emph{$M$-guessed} if there is $e \in M$ such that $d \cap M = e \cap M$.
    \end{enumerate}
    \item For $x \in M$, $M$ is a \emph{$\kappa^+$-guessing model for $x$} if
    every $(\kappa^+, M)$-approximated subset of $x$ is $M$-guessed.
    \item $M$ is a \emph{$\kappa^+$-guessing model} if, for every $x \in M$, it is a $\kappa^+$-guessing 
    model for $x$.
  \end{enumerate}
We denote by $\GMP_{\kappa^{++}}(\theta)$ the assertion that the set of $M \in \power_{\kappa^{++}}(H(\theta))$ such that $M$ is a $\kappa^+$-guessing model is stationary in $\power_{\kappa^{++}}(H(\theta))$. We write $\GMP_{\kappa^{++}}$ if $\GMP_{\kappa^{++}}(\theta)$ holds for every regular $\theta \ge \kappa^{++}$.
\end{definition}

Note that we specifically require that the guessing models are closed under sequences of length $<\kappa$. In other words, we require in $\GMP_{\kappa^{++}}(\theta)$ that the guessing models concentrate on the stationary set of all $x \in \power_{\kappa^{++}}(H(\theta))$ such that ${}^{<\kappa}x \sub x$ (this set is in fact closed under increasing unions of cofinality at least $\kappa$). For the principle $\GMP_{\omega_2}$ from \cite{VW:PFA}, this is automatic because it just means that the models are closed under finite sequences; however for $\kappa > \omega$ we need to require this property to show that the concept behaves as expected (see for instance Lemma \ref{lm:gprop}).

Viale and Weiss essentially proved in \cite{VW:PFA} the following (see \cite{VW:PFA} for the definition of $\ISP_{\omega_2}$):

\begin{Fact}\label{PFA}
\begin{enumerate}[(i)]
\item $\GMP_{\omega_2}$ is equivalent to $\ISP_{\omega_2}$.
\item $\PFA$ implies $\GMP_{\omega_2}$.
\end{enumerate}
\end{Fact}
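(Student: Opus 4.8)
Throughout, $\kappa=\omega$, so $\kappa^+=\omega_1$ and $\kappa^{++}=\omega_2$, and I use the standard notion of a \emph{slender} $\power_{\omega_2}(\lambda)$-list: a sequence $\vec d=\seq{d_a}{a\in\power_{\omega_2}(\lambda)}$ with each $d_a\sub a$ such that for every large regular $\mu$ there is a club $E\sub\power_{\omega_2}(H(\mu))$ with $d_{N\cap\lambda}\cap b\in N$ whenever $N\in E$ and $b\in N\cap\power_{\omega_1}(N)$; an \emph{ineffable branch} for $\vec d$ is a $d\sub\lambda$ with $\set{a\in\power_{\omega_2}(\lambda)}{d_a=d\cap a}$ stationary, and $\ISP_{\omega_2}$ asserts that every slender $\power_{\omega_2}(\lambda)$-list, for every $\lambda\ge\omega_2$, has an ineffable branch. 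For the implication $\GMP_{\omega_2}\Rightarrow\ISP_{\omega_2}$ of (i) the plan is: given $\vec d$, pick $\theta$ large enough that the slenderness club is available below $\theta$, and apply $\GMP_{\omega_2}(\theta)$ to get that the set $S_0$ of $\omega_1$-guessing models $M\el H(\theta)$ with $\vec d,\lambda\in M$ lying in the relevant trace of the slenderness club is stationary. For $M\in S_0$ one checks from slenderness that $d_{M\cap\lambda}$ is $(\omega_1,M)$-approximated as a subset of $\lambda$ (if $z\in M\cap\power_{\omega_1}(M)$ then $z\cap\lambda\in M$ is countable and $\sub M\cap\lambda$, so $d_{M\cap\lambda}\cap z=d_{M\cap\lambda}\cap(z\cap\lambda)\in M$); hence, $M$ being a guessing model, there is $e_M\in M$, which we may take to be a subset of $\lambda$, with $d_{M\cap\lambda}=e_M\cap(M\cap\lambda)$. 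The map $M\mapsto e_M$ is regressive on $S_0$, so by normality of the club filter on $\power_{\omega_2}(H(\theta))$ it is constant with some value $e$ on a stationary $S_1\sub S_0$; then the projection $\set{M\cap\lambda}{M\in S_1}$ is stationary in $\power_{\omega_2}(\lambda)$ and contained in $\set{a}{d_a=e\cap a}$, so $e$ is an ineffable branch.

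For the converse $\ISP_{\omega_2}\Rightarrow\GMP_{\omega_2}$ I would argue by contraposition. If $\GMP_{\omega_2}(\theta)$ fails, fix a club $C\sub\power_{\omega_2}(H(\theta))$ none of whose size-$\omega_1$ members is a guessing model, pass to $C_1=\set{M\in C}{\omega_1\sub M}$, and attach by normality to each $M\in C_1$ a witness $(x_M,d_M)$ with $x_M\in M$, $d_M\sub x_M$, and $d_M$ $(\omega_1,M)$-approximated but not $M$-guessed. The point is to uniformize these into a single slender $\power_{\omega_2}(\lambda)$-list, $\lambda=|H(\theta)|$, with no ineffable branch: fixing a bijection $\pi\colon\lambda\to H(\theta)$, transport $C_1$ to a club of ``good'' $a$ and set $d_a:=\pi^{-1}[d_{\pi[a]}\cap\pi[a]]\sub a$ for good $a$ (and $d_a=\emptyset$ otherwise). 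Slenderness of this list is a direct restatement of the $(\omega_1,M)$-approximability of the $d_M$, while an ineffable branch would decode, again via normality, to a single $e$ guessing $d_M$ inside $M$ for some $M\in C_1$ — a contradiction. This uniformization is the technically fussy part of (i) and is essentially carried out in \cite{Weiss:gentree}; together with (i) it also shows that $\GMP_{\omega_2}$ may equivalently be phrased in terms of the projected families on the various $\power_{\omega_2}(\lambda)$.

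For (ii) it is enough by (i) to prove $\PFA\Rightarrow\ISP_{\omega_2}$. Given a slender $\power_{\omega_2}(\lambda)$-list $\vec d$, I would follow Viale and Weiss \cite{VW:PFA} and, for a large regular $\chi$, work with a proper poset $\P_{\vec d}$ built from finite $\in$-increasing chains of elementary submodels of a suitable expansion of $H(\chi)$ that all contain $\vec d$, have size $\omega_1$, and are ``$\vec d$-coherent'' (each $d_{M_i\cap\lambda}$ is the restriction to $M_i$ of $d_{M_n\cap\lambda}$), ordered by reverse inclusion — a side-condition forcing in the style of \cite{VW:PFA}. One then verifies that $\P_{\vec d}$ is proper, the master condition below a given $p$ lying in a countable elementary submodel $N$ being obtained by adjoining $N\cap H(\chi)$ to the chain, where slenderness is exactly what guarantees the coherence fragment needed for this already lies in $N$; that for each $\alpha<\omega_1$, and for each club $D$, the conditions whose models cover more of $\lambda$, respectively reflect $D$, are dense; and that a filter $G$ meeting these $\omega_1$ dense sets, furnished by $\PFA$, produces a well-defined $d:=\bigcup\set{d_{M\cap\lambda}}{M\in\bigcup G}$ which is an ineffable branch.

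The step I expect to be the main obstacle is the properness of $\P_{\vec d}$ together with the exact calibration of its conditions: this is the only place where slenderness is used in a genuinely non-formal way, and the definition has to be tuned finely enough that a master condition always exists while the density statements needed to extract the branch remain true. Everything else — the submodel bookkeeping in both directions of (i), and the normality and projection facts for $\power_{\omega_2}$ — is routine.
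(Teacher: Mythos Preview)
The paper does not give its own proof of this statement: it is recorded as a Fact with attribution to Viale and Weiss \cite{VW:PFA} (and implicitly to \cite{Weiss:gentree}), with no argument supplied. Your sketch is a faithful outline of exactly those arguments --- the normality/pressing-down proof of $\GMP_{\omega_2}\Rightarrow\ISP_{\omega_2}$, the contrapositive uniformization for the converse, and the side-condition forcing for $\PFA\Rightarrow\ISP_{\omega_2}$ --- so there is nothing to compare beyond noting that you have expanded a citation into the standard proof it points to.
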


Models with $\GMP_{\kappa^{++}}$ can be obtained starting with sufficiently large cardinals:

\begin{Fact}\label{f:GMP}
Suppose $\kappa = \kappa^{<\kappa}$ is regular and $\lambda>\kappa$ is supercompact. Then in the Mitchell model $V[\M(\kappa,\lambda)]$, which turns $\lambda$ to $\kappa^{++}$, $\GMP_{\kappa^{++}}$ holds.
\end{Fact}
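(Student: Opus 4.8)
The plan is to adapt the argument of Weiss~\cite{Weiss:gentree} (see also Viale--Weiss~\cite{VW:PFA}), who established the case $\kappa=\omega$: that $\ISP_{\omega_2}$---equivalently, by Fact~\ref{PFA}(i), $\GMP_{\omega_2}$---holds in the Mitchell model collapsing a supercompact to $\omega_2$. First I would record the routine generalization of Fact~\ref{PFA}(i), namely that for regular $\kappa=\kappa^{<\kappa}$ the principle $\GMP_{\kappa^{++}}$ is equivalent to $\ISP_{\kappa^{++}}$; the proof of~\cite{VW:PFA} goes through with ``finite'' replaced throughout by ``of size $<\kappa$'', using the $<\kappa$-closure demanded of guessing models in Definition~\ref{guessing_model_def}. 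It then suffices to verify $\ISP_{\kappa^{++}}$ in $V[\M(\kappa,\lambda)]$; alternatively one runs the whole argument directly with guessing models in place of slender lists.

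So, work in $V[G]$, where $G$ is $\M(\kappa,\lambda)$-generic and $\lambda=\kappa^{++}$, fix a cardinal $\lambda'\ge\kappa^{++}$ and a slender $\power_{\kappa^{++}}(\lambda')$-list $D=\langle d_a \mid a\in\power_{\kappa^{++}}(\lambda')\rangle$, and seek an ineffable branch for $D$. In $V$, using the supercompactness of $\lambda$, fix a $\chi$-supercompactness embedding $j\colon V\to N$ with $\mathrm{crit}(j)=\lambda$ and $j(\lambda)>\chi$, where $\chi$ is large enough that names for $D$ and for a structure witnessing its slenderness lie in $H(\chi)$ (so in particular $\chi\ge\lambda'$ and ${}^{\chi}N\subseteq N$ in $V$). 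Since $\M(\kappa,\lambda)$ has size $\lambda$ and is computed identically in $V$ and $N$, one has in $N$ a factorization $j(\M(\kappa,\lambda))\cong\M(\kappa,\lambda)*\dot{\mathbb{T}}$, where $\mathbb{T}$ is the ``tail'' of the Mitchell construction on $[\lambda,j(\lambda))$. Because the conditions of $\M(\kappa,\lambda)$ have supports of size $<\kappa$ and the collapsing coordinates are given by $\Add(\kappa,\alpha)$-names ($\alpha<\lambda$) of hereditary size $<\kappa$, we have that $\restr{j}{\M(\kappa,\lambda)}$ is the identity, so $j[G]=G$ sits inside the first coordinate of any generic for $j(\M(\kappa,\lambda))$. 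Hence, letting $H$ be $\mathbb{T}$-generic over $V[G]$---obtained simply by forcing, with no claim that $H\in V[G]$---the embedding lifts to $\hat\jmath\colon V[G]\to N[G*H]$ in $V[G][H]$.

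Now put $d=\{\xi<\lambda' \mid \hat\jmath(\xi)\in\hat\jmath(D)(j[\lambda'])\}$; note that $\restr{j}{\lambda'}\in N$ by the closure of $N$, so $j[\lambda']=\hat\jmath[\lambda']\in N$, and that $\hat\jmath$ acts pointwise on each $a\in\power_{\kappa^{++}}(\lambda')$ since $|a|<\lambda=\mathrm{crit}(\hat\jmath)$. Two things remain to check: (a)~$d\in V[G]$; and (b)~$S:=\{a\in\power_{\kappa^{++}}(\lambda') \mid d\cap a=d_a\}$ is stationary in $V[G]$. For (b), the identity $\hat\jmath(d)\cap\hat\jmath[\lambda']=\hat\jmath[d]=\hat\jmath(D)(\hat\jmath[\lambda'])$ says exactly that $\hat\jmath[\lambda']\in\hat\jmath(S)$; on the other hand $\{C\in V[G] \mid \hat\jmath[\lambda']\in\hat\jmath(C)\}$ is (in $V[G][H]$) a $\kappa^{++}$-complete normal fine ultrafilter on $\power_{\kappa^{++}}(\lambda')^{V[G]}$, hence extends the club filter; so every club of $V[G]$ meets $S$, which is the stationarity of $S$ in $V[G]$---this uses (a), to know $S\in V[G]$. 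For (a), transporting the slenderness of $D$ through $\hat\jmath$ and evaluating at $\hat\jmath[\lambda']$ shows that $d\cap b\in V[G]$ for every $b\in\power_{\kappa^+}(\lambda')\cap V[G]$; one then invokes the structure of the Mitchell tail $\mathbb{T}$---its decomposition into a $<\kappa$-closed ``$\Add$-part'' and a $<\kappa^+$-closed ``collapsing part'', together with the ensuing homogeneity---to conclude that the value $\hat\jmath(D)(\hat\jmath[\lambda'])$ is already forced by a condition in $G$ and so is independent of the tail generic $H$; therefore $d\in V[G]$.

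I expect step (a) to be the main obstacle: one must understand $\mathbb{T}$ well enough to know that the guessed branch $d$ is not affected by $H$---equivalently, that $\mathbb{T}$ adds no relevant new branch over objects of $V[G]$---even though $\mathbb{T}$ collapses $\lambda=\kappa^{++}$; this is the technical heart of Weiss's argument, and is where the concrete presentation of $\M(\kappa,\lambda)$ (as in~\cite{ABR:tree}) is used. The passage from $\kappa=\omega$ to regular $\kappa=\kappa^{<\kappa}$ introduces no genuinely new difficulty beyond the uniform substitution of ``of size $<\kappa$'' for ``countable'' and the use of the $<\kappa$-closure clause of Definition~\ref{guessing_model_def}. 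Combining the generalized equivalence $\GMP_{\kappa^{++}}\Leftrightarrow\ISP_{\kappa^{++}}$ with the instance of $\ISP_{\kappa^{++}}$ just verified then yields $\GMP_{\kappa^{++}}$ in $V[\M(\kappa,\lambda)]$.
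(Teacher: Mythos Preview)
Your proposal is correct and follows essentially the same approach as the paper: both defer to the Weiss and Viale--Weiss argument, generalized from $\kappa=\omega$ to arbitrary regular $\kappa=\kappa^{<\kappa}$. The paper's proof is terser than yours---it takes the lifting argument entirely for granted and singles out only the one point that is genuinely new at level $\kappa$, namely why the guessing models can be taken to be ${<}\kappa$-closed (as required by Definition~\ref{guessing_model_def}): this comes from the fact that in Magidor's ineffability characterization of supercompactness the relevant stationary set lies in a normal ultrafilter, so one may restrict to ${<}\kappa$-closed models in $V$, and $\M(\kappa,\lambda)$ adds no new ${<}\kappa$-sequences. One small correction to your sketch of step~(a): the mechanism that shows $d\in V[G]$ is not ``homogeneity'' of the tail but rather the $\kappa^+$-approximation property of the Mitchell tail over $V[G]$ (equivalently, that it adds no fresh $\kappa^+$-approximated subsets), which is how Weiss's argument actually proceeds.
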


\begin{proof}
The argument is essentially the same as in \cite{Weiss:gentree} and \cite{VW:PFA}. Let us only mention why we can assume that the guessing models concentrate on sets closed under $<\kappa$-sequences, as we require in our definition of $\GMP_{\kappa^{++}}$. This follows from the fact that the characterization of $\theta$-supercompactness of a supercompact cardinal $\lambda$ by means of $\theta$-ineffability (see \cite[p.~281]{Magidor:supercompact}) ensures the stationarity of the required set by showing the stronger property, namely that the set in question is an element of a normal ultrafilter on  $\power_\lambda(\theta)$. When reformulated for $\power_{\lambda}(H(\theta))$, with $\theta = |H(\theta)|$, it follows we can start with stationary sets which concentrate on the set of all submodels of $H(\theta)$ which are closed under sequences of length $<\mu$, for any fixed $\mu < \lambda$. Since $\M(\kappa,\lambda)$ does not add new sequences of length $<\kappa$ (but adds many new subsets of $\kappa$), the argument proceeds with $\mu = \kappa$.
\end{proof}

The following lemma generalizes the analogous lemma in \cite{CK:ind} which was formulated for $\kappa = \omega$.

\begin{lemma} \label{lm:gprop}Suppose $\kappa$ is an infinite regular cardinal, and let $M \el H(\theta)$ be a $\kappa^+$-guessing model for some regular $\theta \ge \kappa^{++}$. Let $\nu \in M$ be a cardinal with cofinality $\ge \kappa^+$.
\bce[(i)]
\item Then $\cf(\mx{sup}(M \cap \nu)) = \kappa^+$, and in particular $\kappa^+ \sub M$ and so $M \cap \kappa^{++}$ is an ordinal.
\item If $\theta \ge \kappa^{+3}$, then moreover $\cf(M \cap \kappa^{++}) = \kappa^+$.
\ece
\end{lemma}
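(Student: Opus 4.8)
I would first record the basic fact that $\kappa \sub M$: this needs only $M \el H(\theta)$ and ${}^{<\kappa}M \sub M$, since if $\alpha$ were the least ordinal in $\kappa \setminus M$ then $\alpha \sub M$, so $\mathrm{id}\rest\alpha \in {}^{<\kappa}M \sub M$, whence $\alpha$ (being its domain) lies in $M$ — a contradiction. A similar argument — each ordinal below $\kappa^{++}$ carries a surjection from $\kappa^+$ which, if it lies in $M$, has range inside $M$ — shows that $M \cap \kappa^+$ is an initial segment of $\kappa^+$ (using just $\kappa \sub M$), and, once $\kappa^+ \sub M$ is known, that $M \cap \kappa^{++}$ is an initial segment of $\kappa^{++}$; these will give the ``in particular'' clauses of (i).

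The heart of the argument would be the claim: \emph{if $\nu' \in M$ is a cardinal with $\delta' := \mathrm{sup}(M \cap \nu') < \nu'$, then $\delta' \notin M$ and $\cf(\delta') = \kappa^+$.} That $\delta' \notin M$ is immediate from elementarity, and since $\cf(\delta') \le |M \cap \nu'| \le \kappa^+$ it suffices to rule out $\cf(\delta') \le \kappa$. If $\cf(\delta') < \kappa$, an increasing cofinal sequence in $\delta'$ of that length with entries in $M \cap \nu'$ lies in ${}^{<\kappa}M \sub M$, forcing $\delta' \in M$. So one may assume $\cf(\delta') = \kappa$ and fix an increasing cofinal sequence $\langle c_i \mid i < \kappa \rangle$ in $\delta'$ with all $c_i \in M \cap \nu'$. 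The plan is to feed the \emph{graph} $d := \{(i,c_i) \mid i < \kappa\} \sub \kappa \times \nu' \in M$ into the guessing property (note $d \sub M$). To check $d$ is $(\kappa^+,M)$-approximated, take $z \in M \cap \power_{\kappa^+}(M)$, which one may assume is $\sub \kappa \times \nu'$; the set of second coordinates of $z$ lies in $M$ and is contained in $M \cap \nu' \sub \delta'$, so its supremum $\beta$ is in $M$, hence $\beta < \delta'$; then every $(i,c_i) \in z$ has $c_i \le \beta$, so $i < i_\beta$, where $i_\beta$ is the least $i$ with $c_i > \beta$ (necessarily $< \kappa$), giving $d \cap z = \{(i,c_i) \mid i < i_\beta\} \cap z \in M$ because the graph of $\langle c_i \mid i < i_\beta \rangle$ lies in $M$.

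Guessing then gives $e \in M$ with $e \cap M = d$ (as $d \sub M$), and one may take $e \sub \kappa \times \nu'$. For each $i < \kappa$ the fibre $e_i := \{\eta \mid (i,\eta) \in e\}$ is in $M$ with $e_i \cap M = \{c_i\}$; but a non-empty set of ordinals lying in $M$ whose intersection with $M$ is a singleton must equal that singleton, so $e_i = \{c_i\}$ for every $i$, i.e. $e = d$. Then $d \in M$, so $\langle c_i \mid i < \kappa \rangle \in M$ and $\delta' = \mathrm{sup}_{i<\kappa} c_i \in M$, contradicting $\delta' \notin M$; this proves the claim. I would then deduce (i) as follows: the claim applied to $\nu' = \kappa^+$ gives $\mathrm{sup}(M \cap \kappa^+) = \kappa^+$ (a proper supremum would have cofinality $\le \kappa$), and with the initial-segment fact this gives $\kappa^+ \sub M$, hence $M \cap \kappa^{++}$ is an ordinal; for the given $\nu$, either $\mathrm{sup}(M \cap \nu) < \nu$ and the claim applies, or $\mathrm{sup}(M \cap \nu) = \nu$, in which case $\cf(\nu) \le |M \cap \nu| \le \kappa^+$ combines with $\cf(\nu) \ge \kappa^+$ to give $\cf(\nu) = \kappa^+$. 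For (ii), when $\theta \ge \kappa^{+3}$ we have $\kappa^{++} \in M$, so (i) applies to $\nu = \kappa^{++}$; since $M \cap \kappa^{++}$ has no largest element it is a limit ordinal equal to its supremum, so $\cf(M \cap \kappa^{++}) = \kappa^+$.

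The step I expect to be the crux is the choice of the approximated set: encoding the witnessing cofinal sequence as the graph $\{(i,c_i) \mid i < \kappa\}$ rather than merely as its range $\{c_i \mid i < \kappa\}$. This is what pins every fibre of a guess $e$ to a single point, thereby forcing $e$ to recover the entire sequence; guessing the bare range would not obviously suffice, since a priori a guess could contain many ordinals below $\delta'$ outside $M$. The companion point requiring care is that this $d$ really is $(\kappa^+,M)$-approximated, and that verification leans squarely on $\delta' \notin M$, which is precisely what keeps every small $z \in M$ — in particular its second coordinates — bounded below $\delta'$.
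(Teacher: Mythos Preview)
Your proof is correct and supplies the full details that the paper omits --- the paper simply cites Lemma~2.3 of Cox--Krueger and remarks that the closure hypothesis ${}^{<\kappa}M \subseteq M$ is what makes the argument go through for $\kappa > \omega$. The core of your argument (ruling out $\cf(\delta') \le \kappa$ by feeding a witnessing cofinal sequence into the guessing property) is the expected strategy; your decision to encode the cofinal $\kappa$-sequence as its \emph{graph}, so that each fibre of the guess $e$ is forced to be a singleton and hence $e = d$, is a clean way to close the argument without needing to know in advance that small subsets of $M$ lying in $M$ are contained in $M$.

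One small wrinkle worth flagging: when you write ``the claim applied to $\nu' = \kappa^+$'', you are tacitly assuming $\kappa^+ \in M$, which the bare hypotheses do not guarantee. The repair uses nothing new: if $\delta := M \cap \kappa^+ < \kappa^+$, observe $\delta \notin M$ directly (otherwise $\delta \in \delta$), and run your graph-guessing argument with ambient set $\nu \times \nu$ for the \emph{given} $\nu \in M$ (since $d \subseteq \kappa \times \kappa^+ \subseteq \nu \times \nu$). Once $\kappa^+ \subseteq M$ is in hand you get $\kappa \in M$, and hence $\kappa^{++} \in M$ whenever $\theta \ge \kappa^{+3}$, so your derivation of (ii) stands as written --- and this is exactly the deduction the paper makes for (ii).
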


\begin{proof}
(i). This is like Lemma 2.3 from \cite{CK:ind} using the fact that ${}^{<\kappa}M \sub M$.

(ii). If $\theta \ge \kappa^{+3}$, then $\kappa^{++} \in M$, and the claim follows by (i).
\end{proof}

Generalizing the results known for $\GMP_{\omega_2}$, let us review the argument that $\GMP_{\kappa^{++}}$ implies $\TP(\kappa^{++})$ and $\neg \wKH(\kappa^+)$.

\begin{lemma}\label{lm:GMPimplies}
$\GMP_{\kappa^{++}}$ implies $\TP(\kappa^{++})$ and $\neg \wKH(\kappa^+)$, and hence in particular $2^\kappa \ge \kappa^{++}$.
\end{lemma}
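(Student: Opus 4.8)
The plan is to prove the two tree-theoretic consequences separately, in each case fixing a purported counterexample tree $T$, choosing a $\kappa^+$-guessing model $M\el H(\theta)$ containing $T$ (available by $\GMP_{\kappa^{++}}(\theta)$ for suitably large regular $\theta$, e.g.\ $\theta\ge\kappa^{+3}$ so that Lemma \ref{lm:gprop}(ii) applies), and using the guessing property to ``guess'' a cofinal branch. Throughout I will use that $M\cap\kappa^{++}$ is an ordinal of cofinality $\kappa^+$ by Lemma \ref{lm:gprop}.

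For $\TP(\kappa^{++})$: suppose $T$ is a $\kappa^{++}$-Aronszajn tree; without loss of generality $T$ lives on the ordinals and each level $T_\alpha$ has size $\le\kappa^+$. Let $\delta=M\cap\kappa^{++}$, a limit ordinal of cofinality $\kappa^+$. Since ${}^{<\kappa}M\sub M$ and $|M|=\kappa^+$, I want to produce a node $b$ on level $\delta$ of (a completion of) $T$ whose set of predecessors $d=\{t\in T : t<_T b\}$ is $(\kappa^+,M)$-approximated. The key point is that for any $z\in M\cap\power_{\kappa^+}(M)$, the set $z$ meets only boundedly many levels below $\delta$ — more carefully, $z\cap T$ is a subset of $T$ of size $\le\kappa^+$ lying in $M$, hence by elementarity $z\cap T$ is contained in $T\restr\eta$ for some $\eta<\delta$ — so $d\cap z$ is determined by the initial segment of the branch below level $\eta$, which lies in $M$ (the branch is cofinal through $M$, so each proper initial segment is, or can be taken to be, in $M$). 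The $M$-guessing property then yields $e\in M$ with $d\cap M=e\cap M$; elementarity applied to $e$ inside $M$ recovers an honest cofinal branch of $T$ through $\delta$, and hence (since $T$ has levels all the way to $\kappa^{++}>\delta$) a contradiction with $T$ being Aronszajn. The main obstacle here — and the step I'd spend the most care on — is the bookkeeping that makes $d$ genuinely $(\kappa^+,M)$-approximated: one must be careful that an approximating set $z\in M$ of size $\le\kappa^+$ really is captured below a single level $<\delta$, which uses regularity of $\kappa^{++}$, the size bound $|T_\alpha|\le\kappa^+$, and elementarity of $M$, together with $\cf(\delta)=\kappa^+$.

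For $\neg\wKH(\kappa^+)$: suppose $T$ is a tree of height and size $\kappa^+$ with at least $\kappa^{++}$ cofinal branches; enumerate $\kappa^{++}$-many distinct branches as $\langle b_\xi : \xi<\kappa^{++}\rangle$, and arrange this with $T$ inside $H(\theta)$. Pick a $\kappa^+$-guessing model $M\el H(\theta)$ with $T,\langle b_\xi\rangle\in M$. Since $|M|=\kappa^+$ and $|T|=\kappa^+\sub M$ (as $\kappa^+\sub M$ by Lemma \ref{lm:gprop}(i) and $T$ can be taken to have underlying set $\kappa^+$), we have $T\sub M$; hence each branch $b_\xi$ satisfies $b_\xi\cap M=b_\xi$. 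Now choose $\xi^*\in\kappa^{++}\setminus M$, which exists because $|M\cap\kappa^{++}|=\kappa^+<\kappa^{++}$, and set $d=b_{\xi^*}$. This $d$ is trivially $(\kappa^+,M)$-approximated: for $z\in M\cap\power_{\kappa^+}(M)$ we have $d\cap z=b_{\xi^*}\cap z\sub z\sub M$, and in fact $d\cap z$ is a chain in $T$ of size $<\kappa^+$, and any such chain that is a subset of the level-structure visible in $M$ must already belong to $M$ — more directly, $d\cap z = z\cap T\restr\gamma$ for the $\gamma<\kappa^+$ with $z\cap T\sub T\restr\gamma$ (using $\cf(\kappa^+)=\kappa^+>\kappa\ge\omega$ and that $z$ has size $\le\kappa^+$... here size $<\kappa^+$), and $z\cap T\restr\gamma\in M$ since $z,\gamma,T\in M$. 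By the guessing property there is $e\in M$ with $d\cap M=e\cap M$, i.e.\ $b_{\xi^*}=e\cap M=e\cap T$; but then $e\cap T$ is a cofinal branch of $T$ lying in $M$, so by elementarity it equals $b_\eta$ for some $\eta\in M\cap\kappa^{++}$, forcing $\xi^*=\eta\in M$ — contradiction. Finally, $2^\kappa\ge\kappa^{++}$ follows since $\neg\wKH(\kappa^+)$ in particular rules out $2^{<\kappa^+}=\kappa^+$ being witnessed by the full tree $2^{<\kappa^+}$ having $2^\kappa$ cofinal branches, so $2^\kappa>\kappa^+$.

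The genuinely load-bearing lemma in both arguments is Lemma \ref{lm:gprop}, which guarantees $\kappa^+\sub M$ and $\cf(M\cap\kappa^{++})=\kappa^+$; without the closure ${}^{<\kappa}M\sub M$ built into $\GMP_{\kappa^{++}}$ this would fail for $\kappa>\omega$, which is exactly why the definition includes that clause. I expect the only real subtlety is the verification that the candidate cofinal branch is $(\kappa^+,M)$-approximated — a routine but slightly delicate chase through elementarity and cofinality — while the passage from an $M$-guess $e$ back to an honest branch and then to the contradiction is immediate from elementarity.
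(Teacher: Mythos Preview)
Your overall strategy matches the paper's: pick a $\kappa^+$-guessing model $M$ containing the tree, use that the relevant height has cofinality $\kappa^+$ to show a branch is $(\kappa^+,M)$-approximated, then invoke guessing and elementarity. The $\TP(\kappa^{++})$ argument is essentially the paper's (one cosmetic point: no ``completion'' of $T$ is needed, since a $\kappa^{++}$-tree already has nodes at level $\delta = M \cap \kappa^{++}$).

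The $\neg\wKH(\kappa^+)$ approximation step, however, contains a genuine error. You assert $d \cap z = z \cap T\restr\gamma$, but this is false: $z$ may well contain nodes of $T$ below level $\gamma$ that are \emph{off} the branch $d=b_{\xi^*}$. The preceding vague claim that ``any such chain \ldots\ must already belong to $M$'' is not a proof either, and is not true for arbitrary chains of size $\le\kappa$ in $T$. The fix is precisely the device you used in the $\TP$ case: since $|z| \le \kappa < \cf(\kappa^+)$, the levels appearing in $z\cap T$ are bounded by some $\gamma < \kappa^+$; taking $t^*$ to be the node of the branch $d$ at level $\gamma$ (which lies in $M$ because $T \sub M$), one gets $d \cap z = \{t \in z : t <_T t^*\} \in M$. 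Two minor remarks: the paper's conclusion for $\wKH$ is shorter than yours --- once \emph{every} cofinal branch is $(\kappa^+,M)$-approximated, every branch lies in $M$ (since $T\sub M$, the guess equals the branch), contradicting $|M|=\kappa^+$ directly, with no need to pick $\xi^*\notin M$ and chase the enumeration; and in your last line, the tree $2^{<\kappa^+}$ has $2^{\kappa^+}$ cofinal branches, not $2^\kappa$.
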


\begin{proof}
Let us first show that $\TP(\kappa^{++})$ holds. Suppose $T$ is a $\kappa^{++}$-tree; we wish to show that it contains a cofinal branch. Choose a $\kappa^+$-guessing model $M \el H(\kappa^{+3})$ of size $\kappa^+$, with $\kappa^+ \in M$ and $T \in M$. Let $t$ be any node in $T$ on level $\delta = M \cap \kappa^{++}$. Denote by $d = \set{s}{s <_T t}$ the set of predecessors of $t$ in $T$; notice that $d \sub M$, and $d \sub T \in M$. The set $d$ is $(\kappa^+,M)$-approximated: if $z \in M$ has size $\kappa$, then since $\delta$ has cofinality $\kappa^+$ (Lemma \ref{lm:gprop}(i)), there is $t^* \in d$ such that $z \cap d$ is definable in $M$ as the set of all predecessors of $t^*$ which are in $z$, and so $z \cap d \in M$. Since $M$ is a guessing model, and we showed that $d$ is $(\kappa^+,M)$-approximated, there is $e \in M$ with $e \cap M = d \cap M$. It follows $M \models (\mbox{$e$ is a cofinal branch in $T$})$, and by elementarity this is true in $H(\kappa^{+3})$, and hence in $V$. 

In a similar way we can show that $\neg \wKH(\kappa^+)$ holds. Suppose for contradiction that $T$ is a tree of size $\kappa^+$ which has more than $\kappa^+$-many branches. Choose a $\kappa^+$-guessing model $M \el H(\kappa^{++})$ of size $\kappa^+$, with $\kappa^+ \sub M$ and $T \in M$. Every cofinal branch $b \sub T$ is $(\kappa^+,M)$-approximated, using the fact that $T$ has height $\kappa^+$. It follows that every cofinal branch $b$ must be in $M$, but this contradicts the fact that $M$ has size $\kappa^+$.
\end{proof}

Note that we need just the principle $\GMP_{\kappa^{++}}(\kappa^{+3})$ for obtaining $\TP(\kappa^{++})$, and $\GMP_{\kappa^{++}}(\kappa^{++})$ for obtaining $\neg \wKH(\kappa^+)$.

\brm \label{rm:more}
One may consider $(\mu,M)$-approximations for $\mu < \kappa^+$ (with the obvious modification of the definition in (1)(a) above), dropping the assumption $\kappa^{<\kappa} = \kappa$,  which leads to a three-parameter principle denoted $\GMP(\mu,\kappa^{++},\theta)$,
with our $\GMP_{\kappa++}(\theta)$ being $\GMP(\kappa^+,\kappa^{++},\theta)$ together with the requirement that our guessing models be closed under sequences of length $<\kappa$. See \cite{ChS:guess} for more details regarding the more general notion of guessing.
\erm

\subsection{A preservation theorem}

In \cite{cox_krueger}, Cox and Krueger prove that $\GMP_{\omega_2}$ is compatible with any consistent value of the continuum greater than $\omega_1$ by producing a specific model of $\ZFC$ over which $\GMP_{\omega_2}$ is indestructible under adding any number of Cohen reals. In this section, we remove the dependence of their result on a particular choice of ground model by proving that, over \emph{any} model of $\ZFC$, if $\kappa^{<\kappa} = \kappa$, then $\GMP_{\kappa^{++}}$ is preserved by adding any number of Cohen subsets of $\kappa$.\footnote{We would like to thank to Menachem Magidor who shared with us his unpublished proof that $\GMP_{\omega_2}$ is preserved by adding a single Cohen subset of $\omega$. We have generalized his result to adding any number of Cohen subsets of $\kappa$.}

The following lemma is essentially due to Krueger \cite{krueger_sch}, presented here in a 
slightly more general form.

\begin{lemma} \label{unbounded_prop}
  Suppose that $\kappa < \theta$ are infinite regular cardinals and 
  $M \prec H(\theta)$ is a $\kappa^+$-guessing model such that $\kappa^+ \subseteq M$ and 
  ${^{<\kappa}}M \subseteq M$. Then $M$ is $\kappa^+$-internally unbounded, 
  i.e., for every $z \in M$ and every $x \in [z]^\kappa$, there is 
  $y \in [z]^\kappa \cap M$ such that $x \subseteq y$.
\end{lemma}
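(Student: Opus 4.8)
The plan is to derive $\kappa^+$-internal unboundedness of $M$ from its guessing property, by reducing everything to Lemma \ref{lm:gprop} and organizing the argument as an induction on cardinals.

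First I would make two simplifications. If $y\in M$ and $|y|\le\kappa$, then — fixing a surjection $g\colon\kappa\to y$ in $M$ and using $\kappa\subseteq M$ (which follows from $\kappa^+\subseteq M$) — each value $g(i)$, $i<\kappa$, is definable in $M$, so $y=g[\kappa]\subseteq M$. Hence a witness $y$ of the required form can exist only when $x\subseteq M$, and I may assume this throughout. Next, choosing a bijection $h\colon|z|\to z$ in $M$ and replacing $x$ by $h^{-1}[x]$ — a size-$\kappa$ subset of $|z|\cap M$, since $h\in M$ and $x\subseteq M$ — reduces matters to the case $z=\lambda$ a cardinal: a cover of $h^{-1}[x]$ inside $\lambda$ lying in $M$ maps under $h$ to one of $x$. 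So it suffices to prove, by induction on the cardinal $\lambda$, that every $x\in[\lambda\cap M]^{\le\kappa}$ is contained in some $y\in[\lambda]^{\le\kappa}\cap M$ (one can pad such a $y$ up to size exactly $\kappa$ inside $\lambda$ if needed).

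If $|x|<\kappa$ then $x\in M$ by ${}^{<\kappa}M\subseteq M$, so $y=x$ works; hence assume $|x|=\kappa$, so $\lambda\ge\kappa$. If $\lambda\le\kappa^+$, then $\lambda\subseteq M$ and a cover is immediate (e.g.\ $y=(\sup x)+1$, or $y=\lambda$). The heart of the argument is the case $\lambda\ge\kappa^{++}$ with $\cf(\lambda)\ge\kappa^+$ — which in particular covers all regular $\lambda\ge\kappa^{++}$ — and this is where the guessing property enters, via Lemma \ref{lm:gprop}(i) applied to $\nu=\lambda$: it gives $\cf(\sup(M\cap\lambda))=\kappa^+$, so $x$, having size $\kappa<\kappa^+$, is bounded below $\sup(M\cap\lambda)$, and I may pick $\rho\in M\cap\lambda$ with $\sup x<\rho$. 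Then $\mu:=|\rho|<\lambda$ is a cardinal in $M$; with a bijection $g\colon\mu\to\rho$ in $M$, the inductive hypothesis applied to $g^{-1}[x]\in[\mu\cap M]^{\kappa}$ yields $y_0\in[\mu]^{\le\kappa}\cap M$ covering it, and $y:=g[y_0]\in[\lambda]^{\le\kappa}\cap M$ covers $x$.

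The remaining case is $\lambda\ge\kappa^{++}$ with $\cf(\lambda)\le\kappa$ (so $\lambda$ is singular); fix $\langle\lambda_\xi:\xi<\cf(\lambda)\rangle\in M$ increasing, cofinal in $\lambda$, with each $\lambda_\xi$ a cardinal, hence each $\lambda_\xi\in M$ since $\cf(\lambda)\le\kappa\subseteq M$. If $x$ is bounded in $\lambda$, then $x\subseteq\lambda_\xi$ for some $\xi$ and the inductive hypothesis at $\lambda_\xi$ applies. If $x$ is cofinal in $\lambda$ and $\cf(\lambda)<\kappa$, the inductive hypothesis supplies $y_\xi\in[\lambda_\xi]^{\le\kappa}\cap M$ with $x\cap\lambda_\xi\subseteq y_\xi$ for each $\xi$; since $\cf(\lambda)<\kappa$, the sequence $\langle y_\xi:\xi<\cf(\lambda)\rangle$ lies in $M$ by ${}^{<\kappa}M\subseteq M$, so $y:=\bigcup_\xi y_\xi\in[\lambda]^{\le\kappa}\cap M$ contains $\bigcup_\xi(x\cap\lambda_\xi)=x$. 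The one case not handled by this scheme, and which I expect to be the main technical obstacle, is $x$ cofinal in $\lambda$ with $\cf(\lambda)=\kappa$: here the sequence of covers of the initial segments $x\cap\lambda_\xi$ need not belong to $M$, and neither need their union, so the inductive hypothesis alone is insufficient. This case requires an additional argument; the natural candidate — and, I believe, what underlies Krueger's original treatment — is a further appeal to the guessing property of $M$, beyond Lemma \ref{lm:gprop}, to choose the covers of the $x\cap\lambda_\xi$ coherently enough (relative to a fixed-in-$M$ parametrization of $\bigcup_\xi[\lambda_\xi]^{\le\kappa}$) that their union is itself an element of $M$.
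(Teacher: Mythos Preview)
Your argument has a genuine gap: you explicitly leave open the case where $\lambda$ is singular of cofinality exactly $\kappa$ and $x$ is cofinal in $\lambda$, and your suggestion to ``choose the covers of the $x\cap\lambda_\xi$ coherently enough'' via the guessing property is too vague to count as a proof. In fact the inductive framework is a detour: the only place you invoke the guessing property is through Lemma~\ref{lm:gprop}, and precisely at the step where that no longer suffices you have no replacement.

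The paper's proof is direct, uniform, and avoids induction entirely. Assume for contradiction that $x\in[z]^\kappa$ has no cover in $[z]^\kappa\cap M$. Enumerate $x=\{a_\alpha:\alpha<\kappa\}$, let $x_\beta=\{a_\alpha:\alpha<\beta\}$ for $\beta<\kappa$ (each in $M$ by ${}^{<\kappa}M\subseteq M$), and set $x^*=\{x_\beta:\beta<\kappa\}\subseteq[z]^{<\kappa}$. The key observation is that $x^*$ is $(\kappa^+,M)$-approximated: for any $w\in M$ of size $\le\kappa$, if $|w\cap x^*|=\kappa$ then $\bigcup w\in[z]^{\le\kappa}\cap M$ already covers $x$, contradicting our assumption; hence $|w\cap x^*|<\kappa$ and $w\cap x^*\in M$ by closure. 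Guessing yields $e\in M$ with $e\cap M=x^*$; since $\kappa^+\subseteq M$, one checks $|e|\le\kappa$, so $e\subseteq M$, hence $e=x^*$ and $x=\bigcup e\in M$ --- contradiction.

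The point is that the set $x^*$ of proper initial segments of $x$ is exactly the right object to feed to the guessing property: either some $\kappa$-sized subfamily of it lies in $M$ (and then a cover exists), or only small pieces are visible to $M$ (and then it is approximated, hence guessed, hence $x\in M$). This single idea handles every case at once, including the one you could not close; your induction never isolates it.
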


\begin{proof}
  Assume for sake of contradiction that there is $z \in M$ and $x \in [z]^\kappa$ 
  such that there is no $y \in [z]^\kappa \cap M$ for which $x \subseteq y$. Injectively 
  enumerate $x$ as $\langle a_\alpha \mid \alpha < \kappa \rangle$ and, for each 
  $\beta < \kappa$, let $x_\beta := \{ a_\alpha \mid \alpha < \beta \}$.
  Since ${^{<\kappa}}M \subseteq M$, we have $x_\beta \in M$ for all $\beta < \kappa$.
  Let $x^* := \{x_\beta \mid \beta < \kappa\}$. Then $x^* \subseteq [z]^{<\kappa} \in M$.
  
  \begin{claim}
    $x^*$ is $(\kappa^+, M)$-approximated.
  \end{claim}
  
  \begin{proof}
    Fix $w \in M$ with $|w| \leq \kappa$. Since $x^* \subseteq [z]^{<\kappa} \in M$, 
    we can assume that $w \subseteq [z]^{<\kappa}$. If it were the case that 
    $|w \cap x^*| = \kappa$, then $\bigcup w$ would be an element of $[z]^{\kappa} \cap M$
    covering $x$, contradicting our assumption. Therefore, $|w \cap x^*| < \kappa$, 
    so, since ${^{<\kappa}}M \subseteq M$, we have $w \cap x^* \in M$.
  \end{proof}
  
  Since $M$ is a $\kappa^+$-guessing model, we can find $e \in M$ such that 
  $e \cap M = x^* \cap M = x^*$. If it were the case that $|e| > \kappa$, then there would 
  be an injection $f: \kappa^+ \rightarrow e$ with $f \in M$, and since $\kappa^+ 
  \subseteq M$, we would have $|e \cap M| > \kappa$, contradicting the fact that 
  $e \cap M = x^*$ and $|x^*| = \kappa$. Therefore, $|e| = \kappa$. Since $\kappa 
   \subseteq M$, it follows that $e \subseteq M$, and therefore we in fact have $e = x^*$.
  It then follows that $\bigcup e = x$ is an element of $M$, again contradicting our assumption 
  and completing the proof.
\end{proof}

\begin{theorem} \label{preservation_theorem}
  Let $\kappa \leq \chi < \theta$ be infinite regular cardinals with 
  $\kappa^{<\kappa} = \kappa$ and let $\P := \Add(\kappa, \chi)$. 
  Suppose that $M \prec H(\theta)$ is a $\kappa^+$-guessing model such that $|M| = \kappa^+ \subseteq M$,
  $\P \in M$, and ${^{<\kappa}}M \subseteq M$. Then, in $V[\P]$, $M[\P]$ is a 
  $\kappa^+$-guessing model.
\end{theorem}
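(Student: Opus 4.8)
The plan is to first record the ``soft'' structural facts about $M[G] := \{\dot a^G \mid \dot a \in M \cap V^{\P}\}$, and then to attack the guessing property itself. Since $\P \in M \prec H(\theta)$ we have $|\P| < \theta$, so $M[G] \prec H(\theta)^{V[G]} = (H(\theta)^V)[G]$ with $|M[G]| = \kappa^+ \subseteq M[G]$ in the usual way. As $\P = \Add(\kappa,\chi)$ is closed under descending sequences of length $<\kappa$ it adds no new ${<}\kappa$-sequences, and combined with ${}^{<\kappa}M \subseteq M$ this yields ${}^{<\kappa}(M[G]) \subseteq M[G]$ in $V[G]$, and also $(\kappa^{<\kappa})^{V[G]} = \kappa$. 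The fact I would use repeatedly is that $M[G] \cap V = M$, hence $M[G]$ and $M$ have exactly the same ordinals: because $\kappa^{<\kappa} = \kappa$ the poset $\P$ is $\kappa^+$-cc, so any $\dot a \in M$ forced to name a ground-model set has at most $\kappa$ possible values; this set of values is definable in $M$ from $\dot a$ and $\P$, has size $\le \kappa$, and — since $\kappa \subseteq M$ — is therefore contained in $M$, so $\dot a^G \in M$.

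Next I would reduce the verification of the guessing property to the case where $x$ is an ordinal. Given an arbitrary $x \in M[G]$, fix in $M[G]$ a bijection $g : x \to |x|$; if $d \subseteq x$ is $(\kappa^+, M[G])$-approximated then so is $g[d] \subseteq |x|$ (since $g$ and $g^{-1}$ map $M[G]$ into itself), and from a guess $e'$ for $g[d]$ one recovers a guess $g^{-1}[e' \cap |x|]$ for $d$; so it suffices to show $M[G]$ is a guessing model for every ordinal $\gamma \in M$. Fixing such a $\gamma$ (without loss of generality a cardinal) and a $(\kappa^+, M[G])$-approximated $d \subseteq \gamma$ in $V[G]$, and using $M[G] \cap \mathrm{Ord} = M \cap \mathrm{Ord}$, the task becomes: find $e \in M[G]$ with $e \cap M = d \cap M$. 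Note that if $|\gamma| \le \kappa^+$ then $\gamma \subseteq M$ (as $\kappa^+ \subseteq M$), so in that case the task is simply to show $d \in M[G]$.

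For the heart of the argument I would exploit the product structure of Cohen forcing relative to $M$. Put $\P_0 := \Add(\kappa, \chi \cap M)$ and $\P_1 := \Add(\kappa, \chi \setminus M)$, so $\P \cong \P_0 \times \P_1$; then $\P_0$ is exactly $\P \cap M$, every one of its conditions lies in $M$ (though $\P_0 \notin M$), $\P_0$ is a complete subforcing of $\P$, and hence $G_0 := G \cap M = G \cap \P_0$ is $\P_0$-generic over $V$ and in particular meets every dense subset of $\P_0$ that lies in $M$ — that is, $\P$ is ``strongly $M$-generic''. Two features should drive the argument: (i) $M[G] = M[G_0]$, so the $\P_1$-part of the generic is irrelevant to what must be guessed; and (ii) $\P_1$ uses only coordinates outside $M$ (equivalently outside $M[G]$, since $M[G] \cap \mathrm{Ord} = M \cap \mathrm{Ord}$). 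Granting these, a $(\kappa^+, M[G])$-approximated $d \subseteq \gamma$ should be guessed by transferring the problem along $G_0$ back into $V$ — using the $M$-genericity of $G_0$ — and there applying the hypothesis that $M$ is a guessing model in $V$, to a set built from $\P_0$ and $\gamma$ (not from a name for $d$), together with the cofinality and internal-unboundedness properties of guessing models (Lemmas \ref{lm:gprop} and \ref{unbounded_prop}, in $V$ and, transported via $\kappa^+$-cc, in $V[G]$); the guess so produced is then checked against the $V[G]$-approximation of $d$ on its ${\le}\kappa$-sized pieces.

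The main obstacle is making this transfer precise, and it lies exactly in the fact that $M$ is assumed only to be a guessing model: $\P \cap M$ need not be an element of $M$, and $M$ need not be the union of an internal increasing chain, so the familiar ``factor the forcing through the model'' machinery for internally approachable models is unavailable and must be rebuilt from the guessing property alone. Concretely, one wants a $\P_0$-name in $M$ for the guess $e$, but the obvious candidate — a name for $d$ restricted to the coordinates appearing in $M$ — is not literally definable inside $M$, since $M$ cannot see a name for $d$ at all; so the argument must instead feed the $V[G]$-approximation hypothesis (which controls $d$ on all ${\le}\kappa$-sized sets in $M$) into an application of the guessing property of $M$ in $V$ to a purely forcing-theoretic object, and then verify that the resulting element of $M[G_0] = M[G]$ really agrees with $d$ on $M$. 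Reconciling what $M$ guesses in $V$ with what the approximation hypothesis supplies in $V[G]$ — and establishing the identity $M[G] = M[G_0]$ that underlies the reduction — is where the real content of the theorem sits.
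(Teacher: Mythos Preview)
Your preliminary reductions (the soft structural facts, $M[G] \cap V = M$, the reduction to ordinals) are fine and match the paper. But the core of your plan rests on a false claim and leaves the actual work undone. The assertion $M[G] = M[G_0]$ fails whenever $\chi \not\subseteq M$, i.e.\ whenever $\chi \ge \kappa^{++}$, which is the case of interest: the canonical name $\dot{G}$ is definable from $\P \in M$, so $G = \dot{G}^G \in M[G]$, yet $G \notin V[G_0] \supseteq M[G_0]$ since $G$ determines $G_1$, which is generic over $V[G_0]$ for a nontrivial forcing. So large names in $M$ \emph{do} involve conditions outside $M$, and the $\P_1$-part is not irrelevant to $M[G]$; the factorization does not give the clean reduction you sketch. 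More seriously, you explicitly identify the crucial step --- producing from the $V[G]$-approximation hypothesis a $(\kappa^+,M)$-approximated object in $V$ to which the guessing property of $M$ applies --- and then leave it open, calling it ``where the real content of the theorem sits''. That step \emph{is} the proof.

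The paper does not factor the forcing at all. Using internal unboundedness it writes $\lambda \cap M = \bigcup_{\eta<\kappa^+} z_\eta$ with each $z_\eta \in M$ of size $\kappa$, and for each $\eta$ chooses $p_\eta \le p$ and a name $\dot{d}_\eta \in M$ with $p_\eta \Vdash \dot{d} \cap z_\eta = \dot{d}_\eta$. A $\Delta$-system argument on $\langle \dom(p_\eta) \mid \eta < \kappa^+ \rangle$ produces a root condition $p_\emptyset$; the point is that for $(q,\alpha) \in (\P \times \lambda) \cap M$ with $q \le p_\emptyset \cap M$, whether $q \cup p_\emptyset$ decides $\alpha \in \dot{d}$ (and how) is computed by $\dot{d}_\eta$ and $p_\eta \cap M$ for all but $\le \kappa$ many $\eta$. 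The resulting function $\tau(q,\alpha)$, coding this decision, is therefore $(\kappa^+,M)$-approximated in $V$; its $M$-guess $\sigma$ then defines in $M[G]$ the required guess for $d$. The $\Delta$-system step is precisely the missing idea that your outline lacks.
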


\begin{proof}
Since the proof is somewhat technical, we first give a broad overview of the proof strategy. We will fix an arbitrary cardinal $\lambda \in M$ and a $\P$-name $\dot{d}$ that is forced to be a subset of $\lambda$ that is $(\kappa^+, M[\P])$-approximated. This means that, for all $z \in M \cap \power_{\kappa^+} (\lambda)$, we can find a condition $p_z \in \P$ and a $\P$-name $\dot{d}_z \in M$ forced by $p_z$ to equal $\dot{d} \cap z$. By carefully stitching together certain of these $\P$-names $\dot{d}_z$ in a coherent way, we will construct a single condition $p_\emptyset \in \P$ and a (coding for) a $\P$-name $\tau$ with the following two crucial properties:
\begin{enumerate}
  \item $p_\emptyset \Vdash \dot{d} \cap M = \tau \cap M$;
  \item $\tau$ is $(\kappa^+,M)$-approximated.
\end{enumerate}

Item (2) above will imply that we can find $\sigma \in M$ such that $\sigma \cap M = \tau \cap M$. Then item (1) will be used to argue that, in $V[\P]$, we have $\sigma^G \cap M = \dot{d}^G \cap M$. Therefore, we will have shown that $\dot{d}$ is $M[\P]$-guessed and verified that $M[\P]$ is a $\kappa^+$-guessing model in $V[\P]$.

We now begin the actual proof: 
  Conditions in $\P$ are partial functions $p:\chi \rightarrow 2$ with $|p| < \kappa$, 
  ordered by reverse inclusion. Our cardinal arithmetic assumptions imply that forcing 
  with $\P$ preserves all cardinalities and cofinalities. Moreover, 
  $1_\P \Vdash ``M[\P] \cap \mathrm{On} = M \cap \mathrm{On}"$. Therefore, it suffices 
  to show that $M[\P]$ is forced to be a $\kappa^+$-guessing model for $\lambda$ for 
  every cardinal $\lambda \in M$.
  
  To this end, fix a cardinal $\lambda \in M$, a $\P$-name $\dot{d}$ for a subset of 
  $\lambda$, and a condition $p \in \P$ such that $p \Vdash ``\dot{d} \text{ is } 
  (\kappa^+, M[\P])\text{-approximated}"$. By Proposition \ref{unbounded_prop}, 
  $M$ is $\kappa^+$-internally unbounded. Therefore, we can write $\lambda \cap M$ 
  as $\bigcup_{\eta < \kappa^+} z_\eta$, where
  \begin{itemize}
    \item $\langle z_\eta \mid \eta < \kappa^+ \rangle$ is $\subseteq$-increasing;
    \item for all $\eta < \kappa^+$, we have $|z_\eta| \leq \kappa$ and 
    $z_\eta \in M$.
  \end{itemize}
  For each $\alpha < \lambda$, let $A_\alpha$ 
  be a maximal antichain of $\P$ below $p$ consisting of conditions deciding the 
  statement $\alpha \in \dot{d}$, and let $u_\alpha = \bigcup \{\dom{p} \mid p \in 
  A_\alpha\}$. Since $\P$ has the $\kappa^+$-cc, we know that $|u_\alpha| \leq \kappa$. 
  Moreover, for every $q \le p$, if $q$ decides the statement $\alpha \in
  \dot{d}$, then $q \restriction u_\alpha$ already decides the statement in the same way.
  
  For any $\P$-name $\dot{x}$ for a subset of $\lambda$, let 
  $\hat{\dot{x}}$ denote the function from $\P \times \lambda$ to $3$ such that, for all 
  $(q,\alpha) \in \P \times \lambda$, we have
  \begin{itemize}
    \item $\hat{\dot{x}}(q,\alpha) = 0$ if and only if $q \Vdash \alpha \in \dot{x}$;
    \item $\hat{\dot{x}}(q,\alpha) = 1$ if and only if $q \Vdash \alpha\notin\dot{x}$;
    \item $\hat{\dot{x}}(q,\alpha) = 2$ if and only if $q$ does not decide the statement 
    $\alpha \in \dot{x}$.
  \end{itemize}
  
  For each $\eta < \kappa^+$, using the fact that $p$ forces $\dot{d}$ to be 
  $(\kappa^+, M[\P])$-approximated, find $p_\eta \le p$ and a $\P$-name 
  $\dot{d}_\eta \in M$ such that $p_\eta \Vdash \dot{d} \cap z_\eta = 
  \dot{d}_\eta$. Since $\kappa^{<\kappa} = \kappa$, by passing to a cofinal 
  subsequence of $\langle p_\eta \mid \eta < \kappa^+ \rangle$ if necessary, 
  we may assume that
  \begin{itemize}
    \item $\{\dom{p_\eta} \mid \eta < \kappa^+\}$ forms a $\Delta$-system, with root $r$;
    \item there is a condition $p_\emptyset \in \P$ such that $p_\eta \restriction r = 
    p_\emptyset$ for all $\eta < \kappa^+$.
  \end{itemize}
  For each $\eta < \kappa^+$, let $s_\eta := \dom{p_\eta} \setminus r$. We will show that 
  $p_\emptyset$ forces $\dot{d}$ to be $M[\P]$-guessed. Since $p_\emptyset \le p$ and 
  $p$ was chosen arbitrarily, this suffices to prove the theorem.
  
  \begin{claim} \label{transfer_claim}
    Suppose that $\eta < \kappa^+$, $\alpha \in M \cap \lambda$, and $q \in \P \cap M$ are such that $\alpha \in 
    z_\eta$, $q \le p_\emptyset \cap M$, and $q \| p_\eta$. Then
	$\hat{\dot{d}}_\eta(q \cup (p_\eta \cap M), \alpha) = \hat{\dot{d}}(q \cup p_\eta, \alpha)$.    
  \end{claim}
  
  \begin{proof}
    We will prove that $\hat{\dot{d}}_\eta(q \cup (p_\eta \cap M), \alpha) = 0$ if and only if 
    $\hat{\dot{d}}(q \cup p_\eta, \alpha) = 0$. The proof of the rest of the claim is the same, 
    \emph{mutatis mutandis}. For the forward direction, suppose that $\hat{\dot{d}}_\eta(q \cup 
    (p_\eta \cap M), \alpha) = 0$. Then $q \cup (p_\eta \cap M) \Vdash \alpha \in \dot{d}_\eta$ 
    and $p_\eta \Vdash \dot{d} \cap z_\eta = d_\eta$, so, since $\alpha \in z_\eta$, it 
    follows that $q \cup p_\eta \Vdash \alpha \in \dot{d}$, and hence 
    $\hat{\dot{d}}(q \cup p_\eta, \alpha) = 0$.
    
    For the backward direction, suppose that $\hat{\dot{d}}_\eta(q \cup (p_\eta \cap M), \alpha) \neq 0$.
    Then we can find $q' \le q \cup (p_\eta \cap M)$ in $M$ 
    such that $q' \Vdash \alpha \notin \dot{d}_\eta$. Since $q' \in M$ and $q' \le 
    p_\eta \cap M$, it follows that $q' \| p_\eta$. Then $q' \cup p_\eta \Vdash \dot{d} 
    \cap z_\eta = \dot{d}_\eta$, and hence $q' \cup p_\eta \Vdash \alpha \notin \dot{d}$. 
    Since $q' \cup p_\eta \le q \cup p_\eta$, it follows that $q \cup p_\eta 
    \not\Vdash \alpha \in \dot{d}$, i.e., $\hat{\dot{d}}(q \cup p_\eta, \alpha) \neq 0$.
  \end{proof}
  
  For each $(q,\alpha) \in (\P \times \lambda) \cap M$ with $q \le p_\emptyset \cap M$, 
  let $B_{q,\alpha}$ be the set of all $\eta < \kappa^+$ such that $\alpha \in z_\eta$ 
  and $s_\eta \cap (u_\alpha \cup \dom{q}) = \emptyset$. Since $\langle s_\eta \mid 
  \eta < \kappa^+ \rangle$ is a sequence of pairwise disjoint sets, it follows that $|\kappa^+ \setminus 
  B_{q,\alpha}| \leq \kappa$. Also, for all $\eta \in B_{q,\alpha}$, since $q \in M$,
  $q \leq p_\emptyset \cap M$, and $s_\eta \cap \dom{q} = \emptyset$, we have $q \|p_\eta$.
  
  \begin{claim} \label{uniformity_claim}
    Suppose that $(q,\alpha) \in (\P \times \lambda) \cap M$ and $q \le p_\emptyset \cap M$.
    Then, for all $\eta \in B_{q,\alpha}$, we have 
    \[
      \hat{\dot{d}}_\eta(q \cup (p_\eta \cap M), \alpha) = \hat{\dot{d}}(q \cup p_\emptyset, \alpha).
    \]
  \end{claim}
  
  \begin{proof}
    Fix $\eta \in B_{q,\alpha}$.
    We will prove that $\hat{\dot{d}}_\eta(q \cup (p_\eta \cap M), \alpha) = 0$ if and only if 
    $\hat{\dot{d}}(q \cup p_\emptyset, \alpha) = 0$. The proof of the rest of the claim is the same.
    For the forward direction, suppose that $\hat{\dot{d}}_\eta(q \cup (p_\eta \cap M), \alpha) = 0$.
    Then, by Claim \ref{transfer_claim}, $\hat{\dot{d}}(q \cup p_\eta, \alpha) = 0$, i.e., 
    $q \cup p_\eta \Vdash \alpha \in \dot{d}$. It follows that 
    $(q \cup p_\eta) \restriction u_\alpha \Vdash \alpha \in \dot{d}$. But 
    $s_\eta \cap u_\alpha = \emptyset$, so $q \cup p_\emptyset \le (q \cup p_\eta) 
    \restriction u_\alpha$, i.e., $\hat{\dot{d}}(q \cup p_\emptyset, \alpha) = 0$. 
    
    For the backward direction, suppose that $\hat{\dot{d}}(q \cup p_\emptyset, \alpha) = 0$. 
    Then, \emph{a fortiori}, $\hat{\dot{d}}(q \cup p_\eta, \alpha) = 0$, so, by 
    Claim \ref{transfer_claim}, we have $\hat{\dot{d}}_\eta(q \cup (p_\eta \cap M), \alpha) = 0$, 
    as desired.
  \end{proof}
  
  Let $D := \{(q,\alpha) \in (\P \times \lambda) \cap M \mid q \le p_\emptyset \cap M\}$, 
  and define a function $\tau : D \rightarrow 3$ by letting $\tau(q,\alpha) := 
  \hat{\dot{d}}(q \cup p_\emptyset, \alpha)$ for all $(q,\alpha) \in D$.
  
  \begin{claim}
    $\tau$ is $(\kappa^+, M)$-approximated.
  \end{claim}
  
  \begin{proof}
    It suffices to show that, for every $y \in M$ with $|y| = \kappa$, we have $\tau \restriction 
    y \in M$. Fix such a $y$. We can assume that $y \subseteq \P \times \lambda$ and, for 
    all $(q,\alpha) \in y$, we have $q \leq p_\emptyset \cap M$. Since $|y| = \kappa$, we can 
    find $\eta \in \bigcap\{B_{q,\alpha} \mid (q,\alpha) \in y\}$. By Claim \ref{uniformity_claim}, 
    it follows that, for all $(q,\alpha) \in y$, we have $\tau(q,\alpha) = 
    \hat{\dot{d}}_\eta(q \cup (p_\eta \cap M), \alpha)$. Since $\hat{\dot{d}}_\eta$,
    $p_\eta \cap M$, and $y$ are in $M$, it follows that $\tau \restriction y$ is definable in $M$ 
    and is therefore an element of $M$.
  \end{proof}
  
  Since $M$ is a $\kappa^+$-guessing model, we can find $\sigma \in M$ such that $\sigma \cap M = 
  \tau \cap M = \tau$. By elementarity, $\sigma$ is a function from $\{q \in \P \mid q \le 
  p_\emptyset \cap M\} \times \lambda$ to $3$.
  
  Let $G \subseteq \P$ be a $V$-generic filter with $p_\emptyset \in G$. Let 
  $E := \{\eta < \kappa^+ \mid p_\eta \in G\}$. By a standard density argument, we have 
  $|E| = \kappa^+$. Let $d$ be the interpretation of $\dot{d}$ in $V[G]$, and, for all
  $\eta < \kappa^+$, let $d_\eta$ be the interpretation of $\dot{d}_\eta$. Let 
  \[
    e := \{\alpha < \lambda \mid \exists q \in G ~ [\sigma(q,\alpha) = 0]\}.
  \]
  Everything needed to define $e$ is in $M[G]$, so $e \in M[G]$. We will be done if we show that 
  $e \cap M[G] = d \cap M[G]$. We first prove two preliminary claims.
  
  \begin{claim} \label{evaluation_claim}
    Let $\alpha \in \lambda \cap M$.
    \begin{enumerate}
      \item $\alpha \in d$ if and only if there is $q \in G \cap M$ such that $\tau(q,\alpha) = 0$.
      \item $\alpha \notin d$ if and only if there is $q \in G \cap M$ such that $\tau(q,\alpha) = 1$.
    \end{enumerate}
  \end{claim}
  
  \begin{proof}
    We prove (1). The proof of (2) follows by a symmetric argument. For the forward direction, 
    suppose that $\alpha \in d$. Find $\eta \in E$ such that $\alpha \in z_\eta$ and 
    $s_\eta \cap u_\alpha = \emptyset$. Then $d \cap z_\eta = d_\eta$, so we can find 
    $q \in G \cap M$ such that $q \Vdash \alpha \in \dot{d}_\eta$. 
    It follows that $q \cup p_\eta \Vdash \alpha \in \dot{d}$, and therefore 
    that $(q \cup p_\eta) \restriction u_\alpha \Vdash \alpha \in \dot{d}$.
    Since $s_\eta \cap u_\alpha = \emptyset$, we have $q \cup p_\emptyset \le 
    (q \cup p_\eta) \restriction u_\alpha$. Therefore, $q \cup p_\emptyset \Vdash 
    \alpha \in \dot{d}$, and hence $\tau(q,\alpha) = 0$.
    
    For the backward direction, suppose that $q \in G \cap M$ and $\tau(q,\alpha) = 0$. 
    Find $\eta \in E \cap B_{q,\alpha}$. By Claim \ref{uniformity_claim}, we have 
    $\hat{\dot{d}}_\eta(q \cup (p_\eta \cap M),\alpha) = 0$. Since $q \cup p_\eta \in G$, 
    we therefore have $\alpha \in d_\eta$. Again since $p_\eta \in G$, we have $d \cap z_\eta 
    = d_\eta$, and hence $\alpha \in d$.
  \end{proof}
  
  \begin{claim} \label{nonsense_claim}
    For all $\alpha \in \lambda$ and $q_0, q_1 \in \P$ with $q_0, q_1 \le p_\emptyset \cap M$, 
    if $\sigma(q_0,\alpha) = 0$ and $\sigma(q_1,\alpha) = 1$, then $q_0$ and $q_1$ are 
    incompatible in $\P$.
  \end{claim}
  
  \begin{proof}
    This is immediate from the elementarity of $M$ and the fact that $\sigma \cap M = \tau \cap M$.
  \end{proof}
  
  We are now ready to show that $e \cap M[G] = d \cap M[G]$. Fix $\alpha \in \lambda \cap M$, and 
  suppose first that $\alpha \in d$. By Claim \ref{evaluation_claim}, there is $q \in G \cap M$ 
  such that $\tau(q,\alpha) = 0$. But then we also have $\sigma(q,\alpha) = 0$, so $\alpha \in e$. 
  For the other direction, suppose that $\alpha \notin d$. Again by Claim \ref{evaluation_claim}, 
  there is $q \in G \cap M$ such that $\tau(q,\alpha) = 1$, and hence $\sigma(q,\alpha) = 1$. 
  By Claim \ref{nonsense_claim}, there cannot be $q' \in G$ such that $\sigma(q',\alpha) = 0$, 
  and therefore $\alpha \notin e$.
\end{proof}

We immediately obtain the following corollary.

\begin{corollary}\label{cor:GMP}
  Suppose that $\kappa$ is a regular infinite cardinal such that $\kappa^{<\kappa} = \kappa$ and 
  $\GMP_{\kappa^{++}}$ holds. Then $\GMP_{\kappa^{++}}$ is preserved by adding any number of Cohen 
  subsets to $\kappa$.
\end{corollary}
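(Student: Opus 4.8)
The plan is to deduce Corollary~\ref{cor:GMP} from Theorem~\ref{preservation_theorem} by a standard ``lift a single guessing model through the forcing, then reflect down'' argument. Fix a cardinal $\chi$ and let $\P := \Add(\kappa, \chi)$ (we may take $\chi$ regular; in any case the proof of Theorem~\ref{preservation_theorem} does not really use regularity of $\chi$). Since $\kappa^{<\kappa} = \kappa$, the poset $\P$ is $<\kappa$-closed and $\kappa^+$-cc, hence preserves all cardinals and cofinalities. It suffices to prove that $V[\P] \models \GMP_{\kappa^{++}}(\theta)$ for every regular $\theta \ge \kappa^{++}$. So fix such a $\theta$ and, working in $V[\P]$, fix an arbitrary function $F \colon [H(\theta)^{V[\P]}]^{<\omega} \to H(\theta)^{V[\P]}$; we must produce a $\kappa^+$-guessing model $N \prec H(\theta)^{V[\P]}$ that is closed under $F$.

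First I would pass to an auxiliary large level. Choose in $V$ a sufficiently large regular cardinal $\theta^*$ --- in particular with $\theta^* > \chi$ and $\theta^* > |\P|$ --- so that $H(\theta^*)^{V[\P]} = (H(\theta^*)^V)[G]$, we have $H(\theta)^{V[\P]} \in H(\theta^*)^{V[\P]}$, and some $\P$-name $\dot F$ for $F$ belongs to $H(\theta^*)^V$. Then, in $V$, I would invoke the hypothesis $\GMP_{\kappa^{++}}$ at the level $\theta^*$ to obtain $M^* \prec H(\theta^*)^V$ that is a $\kappa^+$-guessing model with ${}^{<\kappa}M^* \subseteq M^*$ and $\P, \dot F, \theta, H(\theta) \in M^*$ (folding appropriate Skolem functions and these parameters into the function on $[H(\theta^*)]^{<\omega}$ against which one tests stationarity). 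By Lemma~\ref{lm:gprop}(i) we then have $\kappa^+ \subseteq M^*$ and hence $|M^*| = \kappa^+$, so all hypotheses of Theorem~\ref{preservation_theorem} at the level $\theta^*$ are met.

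Now I would come back down to $\theta$. By Theorem~\ref{preservation_theorem}, $M^*[\P]$ is a $\kappa^+$-guessing model in $V[\P]$ --- in particular $M^*[\P] \prec H(\theta^*)^{V[\P]}$, $|M^*[\P]| = \kappa^+$, and $M^*[\P]$ is closed under $<\kappa$-sequences --- and clearly $F = \dot F^G \in M^*[\P]$. Put $N := M^*[\P] \cap H(\theta)^{V[\P]}$. Since $H(\theta)^{V[\P]} \in M^*[\P]$, we get $N \prec H(\theta)^{V[\P]}$; since $\kappa^+ \subseteq M^*[\P]$, we get $|N| = \kappa^+$ and $N$ is closed under $<\kappa$-sequences; and $N$ is closed under $F$ because $M^*[\P]$ is (as $F \in M^*[\P]$) and $F$ has range inside $H(\theta)^{V[\P]}$. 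It remains to see that $N$ is a $\kappa^+$-guessing model: given $x \in N$ and a $(\kappa^+, N)$-approximated $d \subseteq x$, one checks that $d$ is in fact $(\kappa^+, M^*[\P])$-approximated --- the point being that for $w \in M^*[\P] \cap \power_{\kappa^+}(M^*[\P])$ one has $w \cap x \in N \cap \power_{\kappa^+}(N)$ (using $x \subseteq H(\theta)^{V[\P]}$ and regularity of $\theta$), so that $d \cap w = d \cap (w \cap x) \in N$ --- then applies the guessing property of $M^*[\P]$ to get $e \in M^*[\P]$ with $e \cap M^*[\P] = d \cap M^*[\P]$, and verifies that $e \cap x \in N$ witnesses $(e \cap x) \cap N = d \cap N$. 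Thus $N$ is as required, so $V[\P] \models \GMP_{\kappa^{++}}(\theta)$; since $\theta$ was arbitrary, $V[\P] \models \GMP_{\kappa^{++}}$.

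All of the genuine content sits in Theorem~\ref{preservation_theorem}; in the above, the only ingredients that are not pure bookkeeping are the absoluteness fact $H(\theta^*)^{V[\P]} = (H(\theta^*)^V)[G]$ for regular $\theta^* > |\P|$ (standard) and the downward reflection of the guessing property from $H(\theta^*)$ to the contained $H(\theta)$ sketched in the previous paragraph. I expect that last reflection computation to be the step most worth writing out carefully --- though it is short, and is in any case part of the standard toolkit for guessing models.
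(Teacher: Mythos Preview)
Your proof is correct and follows essentially the same approach as the paper: both arguments invoke Theorem~\ref{preservation_theorem} to lift a ground-model guessing model $M^*$ (chosen at a sufficiently large level $\theta^* > \chi$ so that $\P \in M^*$) to a guessing model $M^*[\P]$ in the extension, and then reflect down to an arbitrary $\theta \ge \kappa^{++}$. The only cosmetic differences are that the paper tests stationarity against a $\P$-name for a club rather than a function, and it defers the downward-reflection step (from $H(\theta^*)$ to $H(\theta)$) to a one-line remark after the proof rather than spelling it out as you do.
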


\begin{proof}
  Fix $\chi \geq \kappa$, let $\P := \Add(\kappa, \chi)$, and let $\theta > \chi$ be a sufficiently large 
  regular cardinal. Let $\dot{C}$ be a $\P$-name for a club in $(\power_{\kappa^{++}} (H(\theta)))^{V[\P]}$.
  It suffices to prove that in $V[\P]$ there is forced to be a $\kappa^+$-guessing model $\dot{N} 
  \in \dot{C}$ such that ${^{<\kappa}}\dot{N} \subseteq \dot{N}$. To this end, find a $\kappa^+$-guessing 
  model $M \prec H(\theta)$ such that $\P, \dot{C} \in M$, 
  $|M| = \kappa^+$, and ${^{<\kappa}}M \subseteq M$. By Theorem \ref{preservation_theorem}, 
  $M[\P]$ is forced to be a guessing model in $V[\P]$. Moreover, $\dot{C} \cap M[\P]$ is forced 
  to be a directed subset of $\dot{C}$ of size $\kappa^+$ whose union is all of $M[\P]$. Since 
  $\dot{C}$ is forced to be a club in $(\power_{\kappa^{++}} (H(\theta)))^{V[\P]}$, it follows that 
  $M[\P]$ is forced to be in $\dot{C}$. Finally, since ${^{<\kappa}}M \subseteq M$, it is forced to 
  be the case that ${^{<\kappa}}M[\P] \subseteq M[\P]$. Therefore, $\GMP_{\kappa^{++}}$ continues 
  to hold in $V[\P]$.
\end{proof}

Note that in the previous proof, we showed that in $V[\P]$, $\GMP_{\kappa^{++}}(\theta)$ holds for all sufficiently large regular $\theta \geq \kappa^{++}$, whereas $\GMP_{\kappa^{++}}$ asserts this for all regular $\theta \geq \kappa^{++}$. These are obviously the same, since, if $\kappa^{++} \leq \theta \leq \theta'$ are regular cardinals and $M \prec H(\theta')$ is a $\kappa^+$-guessing model with $\theta \in M$, then $M \cap H(\theta)$ is a $\kappa^+$-guessing model.

\begin{corollary} \label{cor:PFA}
Assume $\PFA$ holds. Then both $\TP(\omega_2)$ and $\neg \wKH(\omega_1)$ are preserved by adding any number of Cohen subsets of $\omega$.
\end{corollary}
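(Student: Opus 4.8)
The plan is to read the statement off as the conjunction of results already established, instantiated at $\kappa = \omega$. Since $\omega^{<\omega} = \omega$, the standing hypothesis $\kappa^{<\kappa} = \kappa$ of Corollary \ref{cor:GMP} and Lemma \ref{lm:GMPimplies} is satisfied for $\kappa = \omega$, and in this case $\GMP_{\kappa^{++}}$ is just $\GMP_{\omega_2}$, the principle of Viale--Weiss \cite{VW:PFA}. So nothing new needs to be proved; one simply chains together Fact \ref{PFA}, Corollary \ref{cor:GMP}, and Lemma \ref{lm:GMPimplies}.

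Concretely, first I would invoke Fact \ref{PFA}(ii) to conclude that $\PFA$ implies $\GMP_{\omega_2}$, so we may assume $\GMP_{\omega_2}$ holds in $V$. ``Adding any number of Cohen subsets of $\omega$'' means forcing with $\P := \Add(\omega, \chi)$ for an arbitrary cardinal $\chi$: for finite $\chi$ there is nothing to prove, and for infinite $\chi$ this is exactly the class of forcings treated by Corollary \ref{cor:GMP} with $\kappa = \omega$. Hence $\GMP_{\omega_2}$ continues to hold in $V[\P]$. Then, working inside $V[\P]$, I would apply Lemma \ref{lm:GMPimplies} with $\kappa = \omega$: it gives that $\GMP_{\omega_2}$ implies both $\TP(\omega_2)$ and $\neg\wKH(\omega_1)$. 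Since $V[\P] \models \GMP_{\omega_2}$, we obtain $V[\P] \models \TP(\omega_2) \wedge \neg\wKH(\omega_1)$, which is precisely the assertion that these two principles are preserved.

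There is no real obstacle here: the entire weight of the corollary rests on Theorem \ref{preservation_theorem} (through Corollary \ref{cor:GMP}) and on Lemma \ref{lm:GMPimplies}, both of which are proved above. The only points requiring any care are purely formal: that ``any number of Cohen subsets of $\omega$'' is understood to range over $\Add(\omega,\chi)$ for all cardinals $\chi$, matching the scope of Corollary \ref{cor:GMP}; and the remark recorded after Corollary \ref{cor:GMP} that checking $\GMP_{\omega_2}(\theta)$ for all sufficiently large regular $\theta$ already yields $\GMP_{\omega_2}$ in full, so no loss is incurred in passing through the $H(\theta)$ argument.
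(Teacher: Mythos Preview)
Your proposal is correct and matches the paper's own proof exactly: the paper simply cites Fact \ref{PFA}, Lemma \ref{lm:GMPimplies}, and Corollary \ref{cor:GMP}, which is precisely the chain you spell out.
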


\begin{proof}
This follows from Fact \ref{PFA}, Lemma \ref{lm:GMPimplies}, and Corollary \ref{cor:GMP}.
\end{proof}

\brm
Note that even a single Cohen subset of $\omega$ destroys $\PFA$ because it adds an $\omega_1$-Suslin tree. So it is essential to isolate a principle weaker than $\PFA$ for our preservation result.
\erm

\section{Preservation of the negation of the weak Kurepa Hypothesis by centered forcings}\label{sec:centered}

\subsection{Centered forcings}

\begin{definition}\label{def:centered} 
Let $\P$ be a forcing and suppose $\kappa$ is a cardinal. We say that $\P$ is \emph{$\kappa^+$-centered} if $\P$ can be written as the union of a family $\set{\P_\alpha\sub \P}{\alpha <\kappa}$ such that for every $\alpha<\kappa$: \begin{equation}\label{eq:c1} \mbox{for every $p,q \in \P_\alpha$ there exists $r \in \P_\alpha$ with $r \le p,q$}.\end{equation} If $\kappa = \omega$, we say that $\P$ is \emph{$\sigma$-centered}.
\end{definition}

It follows that $\P$ can be written as a union of $\kappa$-many filters if we close each $\P_\alpha$ upwards. We require (\ref{eq:c1}) to ensure nice properties of the system $\ST$ defined in Section \ref{sec:der} (in particular the transitivity of $<_i$).

Some definitions of $\kappa^+$-centeredness require just the compatibility of the conditions, with a witness not necessarily in $\P_\alpha$. The condition (\ref{eq:c1}) in this case reads:
\begin{multline}
\label{eq:c2} \mbox{for every $n<\omega$ and every sequence $p_0, p_1, \ldots, p_{n-1}$}\\\mbox{of conditions in $\P_\alpha$ there exists $r \in \P$ with $r \le p_i$ for every $0 \le i < n$}.
\end{multline}

The conditions (\ref{eq:c1}) and (\ref{eq:c2}) are not in general equivalent (see Kunen \cite{Kunen:new}, before Exercise III.3.27), but the distinction is not so important for us because the common forcings such as the Cohen forcing and the Prikry forcings are all centered in the stronger sense of (\ref{eq:c1}). Also note that the conditions are equivalent for Boolean algebras: the definition (\ref{eq:c2}) means that each $\P_\alpha$ is a system with FIP (finite intersection property), and as such can be extended into a filter.

\subsection{Systems and derived systems}\label{sec:system}\label{sec:der}

Suppose $\P$ is a forcing notion. In order to show that certain objects cannot exist in a generic extension $V[\P]$ (such as a weak Kurepa tree), we will work in the ground model and work with a system derived from a $\P$-name for the object in question. We give the general definition of a \emph{system} here and discuss systems derived from names in Definition \ref{def:derived} below. We formulate the definition of the system to fit our purpose, which gives a slightly less general concept than the one introduced in \cite{MAGSH:succ}.

\begin{definition}\label{def:system}
Let $\kappa\le\lambda$ be cardinals and let $D\sub\lambda$ be unbounded in $\lambda$. For each $\alpha\in D$, let $S_\alpha\sub\se{\alpha}\times \kappa$ and let $S=\bigcup_{\alpha\in D} S_\alpha$.\footnote{The elements of $S$ are therefore ordered pairs of ordinals; if the ordinals are not important, we denote the pairs of ordinals by letters $x,y,\ldots$, etc.} Moreover, let $I$ be an index set of cardinality $\le \kappa$ and $\RR=\set{<_i}{i\in I }$ a collection of binary relations on $S$. We say that $\seqv{S,\RR}$ is a \emph{$(\kappa,\lambda)$-system} if the following hold:

\begin{enumerate}[(i)]
\item For each $i\in I$, $\alpha,\beta\in D$ and $\gamma,\delta<\kappa$; if $(\alpha,\gamma)<_i(\beta,\delta)$ then $\alpha<\beta$.
\item For each $i\in I$, $<_i$ is irreflexive and transitive.
\item For each $i\in I$, and $\alpha < \beta < \gamma$, $x \in S_\alpha$, $y \in S_\beta$ and $z \in S_\gamma$, if $x <_i z$ and $y <_i z$, then $x <_i y$. 
\item For all $\alpha<\beta$ there are $y\in S_{\beta}$ and $x\in S_{\alpha}$ and $i\in I$ such that $x<_i y$.
\end{enumerate}

We call a $(\kappa,\lambda)$-system $\seqv{S,\RR}$ a \emph{strong $(\kappa,\lambda)$-system} if the following strengthening of item (iv) holds:

\begin{enumerate} [(iv')]
\item For all $\alpha<\beta$ and for every $y\in S_{\beta}$ there are $x\in S_{\alpha}$ and $i\in I$ such that $x<_i y$.
\end{enumerate}

\end{definition}

If $\seqv{S,\RR}$ is a $(\kappa,\lambda)$-system, we say that the system has height $\lambda$ and width $\kappa$. We call $S_\alpha$ the $\alpha$-th level of $S$.

For the purposes of this paper we introduce the following definition:

\begin{definition}\label{def:well}
Suppose $\kappa \le \lambda$ are cardinals and let $\seqv{S,\RR}$ be a $(\kappa,\lambda)$-system. We call $\seqv{S,\RR}$ \emph{well-behaved} if $|\RR| < \kappa$, i.e.\ the number of relations is strictly smaller than the width of the system.
\end{definition}

A \emph{branch} of the system is a subset $B$ of $S$ such that for some $i\in I$, and for all $a \neq b\in B$, $a<_i b$ or $b<_i a$. A branch $B$ is \emph{cofinal} if for each $\alpha<\lambda$ there are $\beta\ge\alpha$ and $b\in B$ on level $\beta$.

Systems appear naturally when we wish to analyse in the ground model a $\P$-name $\dot{T}$ for a tree which is added by a forcing notion $\P$. We give the definition for the context in which we will use it (more general definitions are possible).

\begin{definition}\label{def:derived}
Assume $\kappa$ is a regular cardinal. Assume $\P$ is a $\kappa^+$-centered forcing notion; let $\P = \bigcup_{\alpha< \kappa}\P_\alpha$ where each $\P_\alpha$ is a filter. Assume further that $\P$ forces that $\dot{T}$ is a tree of height and size $\lambda$, where $\lambda \ge \kappa^+$ is regular. We assume that the domain of $T$ is $\lambda \x \lambda$, where the $\beta$-th level of $\dot{T}$ consists of pairs in $\{\beta\} \x \lambda$. We say that $\ST = \la \lambda \x \lambda,\RR \ra$ is a \emph{derived system} (with respect to $\P$ and $\dot{T}$) if it is a system with domain $\lambda \x \lambda$ which is equipped with binary relations $\RR = \set{<_\alpha}{\alpha < \kappa}$, where $$x <_\alpha y \iff (\exists p \in \P_\alpha)\; p \Vdash x <_{\dot{T}}y.$$
\end{definition}

Following the terminology of Definitions \ref{def:system} and \ref{def:well}, $\ST$ is a strong well-behaved $(\lambda,\lambda)$-system.

\subsection{A preservation theorem}

We can prove a little more general result for the negation of the weak Kurepa hypothesis at $\kappa^+$: we show that it is preserved over any model of $\GMP_{\kappa^{++}}$ by all $\kappa^+$-centered forcings. Note that $\Add(\kappa,\kappa^+)$ is $\kappa^+$-centered, and since a weak Kurepa tree at $\kappa^+$ has size only $\kappa^+$, Theorem \ref{th:2} implies that $\neg \wKH(\kappa^+)$ is preserved by adding any number of Cohen subsets of $\kappa$ (see Corollary \ref{cor:any}). Also recall that $\MA_{\omega_1}$ implies that every ccc forcing of $\omega_1$ is $\sigma$-centered, so our result implies that over models of $\GMP_{\omega_2} + \MA_{\omega_1}$, and hence also of $\PFA$, $\neg \wKH(\omega_1)$ is preserved by all ccc forcings of size $\omega_1$ (see Corollary \ref{cor:MA}).

\begin{theorem}\label{th:2}
$\GMP_{\kappa^{++}}$ implies that $\neg\wKH(\kappa^+)$ is preserved by any $\kappa^+$-centered forcing.
\end{theorem}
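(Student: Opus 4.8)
My plan is to argue by contradiction, transferring the guessing‑model argument behind Lemma~\ref{lm:GMPimplies} from a $\kappa^+$‑centered generic extension down to the ground model by means of the derived system of Definition~\ref{def:derived}. Assume $\P$ is $\kappa^+$‑centered, fix a decomposition $\P=\bigcup_{\alpha<\kappa}\P_\alpha$ into filters, and suppose some $p^\ast\in\P$ forces $\dot{T}$ to be a weak $\kappa^+$‑Kurepa tree. After standard normalizations (fixing the underlying set and passing to a subtree) I may assume $\dot{T}$ is forced to live on $\kappa^+\times\kappa^+$ with its $\beta$‑th level inside $\{\beta\}\times\kappa^+$, and that $p^\ast$ forces every cofinal branch to meet every level; fix a name $\langle\dot{b}_\xi\mid\xi<\kappa^{++}\rangle$ for $\kappa^{++}$ distinct cofinal branches of $\dot{T}$, writing $\dot{b}_\xi(\beta)$ for the name of the $\beta$‑th node. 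Let $\ST=\langle\kappa^+\times\kappa^+,\{<_\alpha\mid\alpha<\kappa\}\rangle$ be the associated derived system, a strong well‑behaved $(\kappa^+,\kappa^+)$‑system; note each $<_\alpha$ is a tree order, since from $x<_\alpha z$ and $y<_\alpha z$ with $\lev(x)<\lev(y)$ one obtains $x<_\alpha y$ by axiom~(iii) of Definition~\ref{def:system}, using a common lower bound in the filter $\P_\alpha$ of the two witnessing conditions together with the fact that the predecessors of a node of $\dot{T}$ are linearly ordered.

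For $\xi<\kappa^{++}$ and $\alpha<\kappa$ put $b_\xi^\alpha:=\{x\in\kappa^+\times\kappa^+\mid(\exists p\le p^\ast)(p\in\P_\alpha\ \&\ p\Vdash x\in\dot{b}_\xi)\}\in V$. The same filter argument shows each $b_\xi^\alpha$ is a $<_\alpha$‑chain, hence has at most one node per level; and since $p^\ast$ forces $\dot{b}_\xi$ to meet every level and each value it may take there is forced by a condition lying in some piece, $\bigcup_{\alpha<\kappa}b_\xi^\alpha$ meets every level of $\ST$. By regularity of $\kappa^+$ some $b_\xi^{\alpha^\ast(\xi)}$ is cofinal; extend it in $V$ to a maximal cofinal $<_{\alpha^\ast(\xi)}$‑chain $\bar B_\xi$. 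This is where $\GMP_{\kappa^{++}}$ enters: fix a large regular $\theta$ and a $\kappa^+$‑guessing model $M\prec H(\theta)$ with $|M|=\kappa^+$, $\kappa^+\subseteq M$, ${}^{<\kappa}M\subseteq M$, and $\P,\langle\P_\alpha\rangle,p^\ast,\ST\in M$; then $\kappa^+\times\kappa^+\subseteq M$ and each $<_\alpha\in M$. For any maximal cofinal $<_\alpha$‑chain $B$ of $\ST$ and any $z\in M$ with $|z|\le\kappa$, taking $y\in B$ of level above every level occurring in $z$ — such $y$ exists since $B$ is cofinal, and $y\in\kappa^+\times\kappa^+\subseteq M$ — the maximality of $B$ yields $B\cap z=\{x\in z\mid x<_\alpha y\}\in M$; so $B$ is $(\kappa^+,M)$‑approximated, hence $M$‑guessed, and since $B\subseteq M$ we get $B=e_B\cap M$ for some $e_B\in M$. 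Thus the collection of all maximal cofinal $<_\alpha$‑chains of $\ST$, ranging over all $\alpha<\kappa$, has size at most $|M|=\kappa^+$. Consequently the map $\xi\mapsto(\alpha^\ast(\xi),\bar B_\xi)$ attains at most $\kappa^+$ values, so there is a fixed pair $(\alpha^\ast,\bar B)$ and a set $X\subseteq\kappa^{++}$ of size $\kappa^{++}$ with $(\alpha^\ast(\xi),\bar B_\xi)=(\alpha^\ast,\bar B)$ for all $\xi\in X$.

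It remains to derive a contradiction from there being $\kappa^{++}$‑many distinct names $\dot{b}_\xi$ ($\xi\in X$) whose $\alpha^\ast$‑slices $b_\xi^{\alpha^\ast}$ are cofinal and contained in the single chain $\bar B$. Fixing a generic $G\ni p^\ast$, if for some $\xi\in X$ cofinally many of the nodes of $\dot{b}_\xi^G$ are forced into $\dot{b}_\xi$ by conditions of $G\cap\P_{\alpha^\ast}$, then $\dot{b}_\xi^G$ is pinned down on a cofinal set of levels by $\bar B$ and hence equals a branch of $\dot{T}^G$ depending only on $\bar B$ — which can therefore hold for at most one $\xi\in X$. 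So for $\kappa^{++}$‑many $\xi\in X$ the levels at which $\dot{b}_\xi^G$ is decided inside $\P_{\alpha^\ast}$ form a bounded set, so the tail of $\dot{b}_\xi^G$ is decided inside $\bigcup_{\alpha\ne\alpha^\ast}\P_\alpha$. One now recurses: at each stage one isolates, among the $\kappa^{++}$‑many surviving indices, a fresh cofinal‑making piece $\alpha$ together with a common maximal cofinal $<_\alpha$‑chain, shrinking $X$ while keeping it of size $\kappa^{++}$, unless a collision of the above sort occurs. Since there are only $\kappa$‑many pieces, after at most $\kappa$ stages one runs out of fresh pieces and a collision is forced, contradicting the distinctness of the $\dot{b}_\xi$.

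The step I expect to be most delicate is making that last recursion rigorous. The notion ``cofinally many nodes of $\dot{b}_\xi^G$ are $\P_{\alpha^\ast}$‑decided'' is $G$‑dependent, so one must either run the entire iteration at the level of names and dense sets — book‑keeping simultaneously, for $\kappa^{++}$‑many indices, which of the $\kappa$ pieces can decide cofinally much of each $\dot{b}_\xi$ — or argue that a single sufficiently generic $G$ already exposes a collision; controlling this interplay between the $\kappa$ pieces and the $\kappa^{++}$ branches, all while staying within the well‑behavedness of $\ST$ (so that the relevant $<_\alpha$‑chains really are $(\kappa^+,M)$‑approximated), is the technical heart of the argument. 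A comparatively routine subsidiary point is checking that the normalizations of $\dot{T}$ are harmless for the statement $\wKH(\kappa^+)$ and that $M$ may be chosen to contain all needed parameters; the fact that $\GMP_{\kappa^{++}}$ entails $\TP(\kappa^{++})$ and $\neg\wKH(\kappa^+)$ in the first place is already available as Lemma~\ref{lm:GMPimplies}.
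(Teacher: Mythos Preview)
Your setup through the pigeonhole on $\alpha^\ast(\xi)$ and the approximation argument is essentially correct and matches the paper. But you have overlooked a simple fact that would spare you the entire final recursion: the sets $b_\xi^{\alpha^\ast}$ are already \emph{downward closed} under $<_{\alpha^\ast}$ (if $x<_{\alpha^\ast}y$ and $y\in b_\xi^{\alpha^\ast}$, take a common lower bound in the filter $\P_{\alpha^\ast}$ of the two witnesses to get $x\in b_\xi^{\alpha^\ast}$), and a cofinal downward-closed $<_{\alpha^\ast}$-chain is automatically maximal. Hence $\bar B_\xi=b_\xi^{\alpha^\ast(\xi)}$, and for $\xi\in X$ you actually have $b_\xi^{\alpha^\ast}=\bar B$, not merely $b_\xi^{\alpha^\ast}\subseteq\bar B$. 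So the task reduces to: given distinct $\xi,\zeta\in X$ with $b_\xi^{\alpha^\ast}=b_\zeta^{\alpha^\ast}$, derive a contradiction.

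This is where the paper's argument is direct and your recursion is both unnecessary and, as written, not sound. For each $x$ in the common set choose $p_x^\xi,p_x^\zeta\in\P_{\alpha^\ast}$ forcing $x\in\dot b_\xi$, $x\in\dot b_\zeta$ respectively, and let $p_x\in\P_{\alpha^\ast}$ be a common lower bound. If $\{p_x:x\in\bar B\}$ has size $\le\kappa$, some single $p$ forces $x\in\dot b_\xi\cap\dot b_\zeta$ for $\kappa^+$-many $x$, hence forces $\dot b_\xi=\dot b_\zeta$. If it has size $\kappa^+$, then by the $\kappa^+$-cc some condition forces the generic to meet $\{p_x\}$ in a set of size $\kappa^+$, and again $\dot b_\xi=\dot b_\zeta$ is forced. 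Either way this contradicts the assumed distinctness. Your recursion, by contrast, conflates the $V$-defined sets $b_\xi^\alpha$ with the $G$-dependent question of which piece of $\P$ decides nodes of $\dot b_\xi^G$; knowing $b_\xi^{\alpha^\ast}$ is cofinal says nothing about whether $G\cap\P_{\alpha^\ast}$ decides cofinally much of $\dot b_\xi$, so the case split that launches the recursion is not justified, and the bookkeeping over $\kappa$ stages (intersecting $\kappa$-many size-$\kappa^{++}$ sets) is not handled.
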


\begin{proof}
Suppose $\P = \bigcup_{i<\kappa}\P_i$ is a $\kappa^+$-centered forcing. Assume for contradiction that $\dot{T}$ is forced by the weakest condition in $\P$ to be a weak $\kappa^+$-Kurepa tree, and let $\ST$ be the derived system with respect to $\dot{T}$, as in Definition \ref{def:derived}. Let $M$ be a $\kappa^+$-guessing model $M \el H(\kappa^{++})$ of size $\kappa^+$ with $\ST \in M$. Since $\ST$ has domain $\kappa^+ \x \kappa^+$, the system with the relations is a subset of $M$.

Let us fix a sequence $\seq{\dot{b}_\alpha}{\alpha<\kappa^{++}}$ of $\P$-names such that \begin{equation} \label{eq:1} 1_\P \Vdash {``}\seq{\dot{b}_\alpha}{\alpha<\kappa^{++}} \mbox{ are pairwise distinct cofinal branches in $\dot{T}$.''}\end{equation}
Working in $V$, there must be some $i < \kappa$, such that for some $I \sub \kappa^{++}$ of size $\kappa^{++}$, and all $\alpha \in I$, there are cofinally many $x$ for which there are $p_x \in \P_i$ with $$p_x \Vdash x \in \dot{b}_\alpha.$$ Let us fix such an $i < \kappa$ and $I \sub \kappa^{++}$.

For each $\alpha \in I$, let us define $$B_\alpha = \set{x \in \ST}{(\exists p \in \P_i)\; p \Vdash x \in \dot{b}_\alpha}.$$ Note that $B_\alpha$ is a cofinal branch in $\ST$. We finish the proof by showing:
\begin{enumerate}[(i)]
\item For each $\alpha \in I$, $B_\alpha$ is an element of $M$.
\item For all $\alpha \neq \beta \in I$, $B_\alpha \neq B_\beta$.
\end{enumerate}

The items (i) and (ii) imply that $M$ has size at least $\kappa^{++}$, which is a contradiction.

With regard to (i), we will show that each $B_\alpha$ is $M$-approximated, and therefore is an element of $M$.\footnote{In general, if some set $d$ is $M$-approximated, and $M$ is guessing, there is some $e \in M$ such that $d \cap M = e \cap M$, where $d \neq e$ is possible. However, in the present case, since $\ST \sub M$, if $B_\alpha$ is $M$-approximated, and $M$ is guessing, then $B_\alpha \in M$.} Let us fix $\alpha \in I$ and $a \in M$ of size $\kappa$. We need to show that $B_\alpha \cap a$ is in $M$. Since the sytem $\ST$ has height $\kappa^+$, and therefore its cofinality in $M$ is $\kappa^+$, there is some $y \in B_\alpha \cap M$ which is above $B_\alpha \cap a$ in $<_i$. It follows that $$B_\alpha \cap a = \set{x \in a \cap \ST}{(\exists p \in \P_i)\; p \Vdash x \in \dot{b}_\alpha} =  \set{x \in a \cap \ST}{x <_i y}.$$ For the identity between the second and third set, fix $p' \in \P_i$ such that $p' \Vdash y \in \dot{b}_\alpha$, and note that if $p \in \P_i$ and $p \Vdash x \in \dot{b}_\alpha$, then the existence of a lower bound of $p,p'$ in $\P_i$ implies $x <_i y$; and conversely, if $p \Vdash x <_{\dot{T}} y$ for some $p \in \P_i$, then the existence of a lower bound in $\P_i$ implies that for some $r \in \P_i$, $r \Vdash x \in \dot{b}_\alpha$. Since the third expression determines a set in $M$ (because all parameters are in $M$), $B_\alpha \cap a$ is in $M$.

With regard to (ii): suppose for contradiction $B_\alpha = B_\beta$ for some $\alpha \neq \beta \in I$. Fix for every $x \in B_\alpha = B_\beta$ some conditions $p^\alpha_x$ and $p^\beta_x$ in $\P_i$ such that $$p^\alpha_x \Vdash x \in \dot{b}_\alpha \mbox{ and } p^\beta_x \Vdash x \in \dot{b}_\beta.$$
Let $p_x \in \P_i$ be some lower bound of $p_x^\alpha, p_x^\beta$. 

Suppose first that  $\set{p_x}{x \in B_\alpha = B_\beta}$ has size $\kappa$. Then there exists some $p$ such that $p = p_x$ for $\kappa^+$ many $x$. This $p$ forces $\dot{b}_\alpha = \dot{b}_\beta$, which contradicts (\ref{eq:1}).

Suppose now that $\set{p_x}{x \in B_\alpha = B_\beta}$ has size $\kappa^+$. The $\kappa^+$-cc of $\P$ implies that there is a condition $p$ which forces that $\dot{G}$ has an intersection with $\set{p_x}{x \in B_\alpha = B_\beta}$ of size $\kappa^+$. In particular, $p$ forces $\dot{b}_\alpha = \dot{b}_\beta$, which contradicts (\ref{eq:1}).
\end{proof}

Theorem \ref{th:2} gives an alternative proof that $\neg \wKH(\kappa^+)$ is preserved by adding any number of Cohen subsets of $\kappa$ over models of $\GMP_{\kappa^{++}}$.

\begin{corollary}\label{cor:any}
Over models of $\GMP_{\kappa^{++}}$, $\neg \wKH(\kappa^+)$ is preserved by adding any number of Cohen subsets of $\kappa$.
\end{corollary}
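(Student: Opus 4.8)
The plan is to deduce this from Theorem~\ref{th:2} by the following reflection argument: a weak $\kappa^+$-Kurepa tree together with a fixed family of $\kappa^{++}$ of its cofinal branches is an object of size $\kappa^{++}$, so by the $\kappa^+$-cc of Cohen forcing it already occurs in a generic extension by a Cohen poset using only $\le \kappa^{++}$ coordinates --- and such a poset is $\kappa^+$-centered, because $\GMP_{\kappa^{++}}$ implies $2^\kappa \ge \kappa^{++}$ (Lemma~\ref{lm:GMPimplies}).

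In detail, work in a model $V \models \GMP_{\kappa^{++}}$, so in particular $\kappa^{<\kappa} = \kappa$ and $2^\kappa \ge \kappa^{++}$. Fix a cardinal $\chi$ and let $\P := \Add(\kappa,\chi)$; as in the proof of Theorem~\ref{preservation_theorem}, $\P$ has the $\kappa^+$-cc and preserves all cardinals and cofinalities. Assume toward a contradiction that $\neg \wKH(\kappa^+)$ fails in $V[\P]$; by restricting below a condition we may assume $1_\P$ forces $\dot T$ to be a weak $\kappa^+$-Kurepa tree, and, relabelling, we may take the underlying set of $\dot T$ to be $\kappa^+$. Fix $\P$-names $\seq{\dot b_\alpha}{\alpha < \kappa^{++}}$ with $1_\P \Vdash$ ``the $\dot b_\alpha$ are pairwise distinct cofinal branches of $\dot T$'' (possible since $\dot T$ is forced to have at least $\kappa^{++}$ such branches). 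Using the $\kappa^+$-cc, replace $\dot T$ (equivalently, names for its ordering and level function, each a name for a subset of $\kappa^+ \times \kappa^+$) and each $\dot b_\alpha$ (a name for a subset of $\kappa^+$) by nice names. Each such nice name mentions, for each of $\kappa^+$-many statements, an antichain of size $\le \kappa$, and is therefore supported on a set of at most $\kappa^+$ coordinates. Let $B \sub \chi$ be the union of all these supports, so $|B| \le \kappa^{++} \cdot \kappa^+ = \kappa^{++}$.

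Now factor $\P \cong \Add(\kappa,B) \times \Add(\kappa,\chi\setminus B)$ and let $G$ be $V$-generic, with $G_B$ its projection to the first factor; then $G_B$ is $\Add(\kappa,B)$-generic over $V$. Since the chosen names are $\Add(\kappa,B)$-names, $T := \dot T^G = \dot T^{G_B}$ and $b_\alpha := \dot b_\alpha^G = \dot b_\alpha^{G_B}$ all lie in $V[G_B]$. The assertions ``$T$ is a tree of height and size $\kappa^+$'', ``$b_\alpha$ is a cofinal branch of $T$'', and ``$b_\alpha \ne b_\beta$'' are each first-order over the objects $T, b_\alpha, \kappa^+$, which are common to $V[G_B]$ and $V[G]$; since moreover $\kappa^+$ and $\kappa^{++}$ are preserved, $T$ is a weak $\kappa^+$-Kurepa tree already in $V[G_B]$, that is, $\neg \wKH(\kappa^+)$ fails in $V[G_B]$.

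On the other hand, $\Add(\kappa,B) \cong \Add(\kappa,|B|)$ with $|B| \le \kappa^{++} \le 2^\kappa$, so we may fix an injection of $|B|$ into ${}^{\kappa}2$; the coding argument behind the remark (preceding Theorem~\ref{th:2}) that $\Add(\kappa,\kappa^+)$ is $\kappa^+$-centered then goes through verbatim, using $\kappa^{<\kappa} = \kappa$, to write $\Add(\kappa,|B|)$ as a union of $\kappa$-many centered pieces. Hence $\Add(\kappa,B)$ is a $\kappa^+$-centered forcing over the model $V \models \GMP_{\kappa^{++}}$, so by Theorem~\ref{th:2} it preserves $\neg \wKH(\kappa^+)$; since $\neg \wKH(\kappa^+)$ holds in $V$ by Lemma~\ref{lm:GMPimplies}, it holds in $V[G_B]$, contradicting the previous paragraph. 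The step that requires the most care is the reflection: one must make sure that not merely the tree but an entire family of $\kappa^{++}$ of its branches is captured by the small Cohen forcing, and that ``being a weak Kurepa tree'' transfers back down to $V[G_B]$; everything else is routine bookkeeping with nice names together with the elementary observation that Cohen forcing adding at most $2^\kappa$ subsets of $\kappa$ is $\kappa^+$-centered.
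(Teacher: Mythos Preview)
Your proof is correct, but it takes a somewhat different route from the paper's. The paper reflects only the tree $T$ (an object of size $\kappa^+$) down to $V[\Add(\kappa,\kappa^+)]$, applies Theorem~\ref{th:2} there to conclude $T$ has at most $\kappa^+$ branches in that intermediate model, and then invokes the $\kappa^+$-Knaster property of Cohen forcing to argue that the remaining factor $\Add(\kappa,\gamma\setminus\kappa^+)$ cannot add new cofinal branches to $T$. You instead reflect the tree \emph{together with} a full family of $\kappa^{++}$ branches down to $V[\Add(\kappa,B)]$ with $|B|\le\kappa^{++}$, and then use the consequence $2^\kappa\ge\kappa^{++}$ of $\GMP_{\kappa^{++}}$ (Lemma~\ref{lm:GMPimplies}) to conclude that $\Add(\kappa,B)$ is itself $\kappa^+$-centered, so Theorem~\ref{th:2} applies directly.

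Your approach is arguably cleaner in that it avoids any appeal to a separate branch-preservation lemma; everything is absorbed into a single application of Theorem~\ref{th:2}. The price is that you make essential use of $2^\kappa\ge\kappa^{++}$, whereas the paper only needs the ZFC fact that $\Add(\kappa,\kappa^+)$ is $\kappa^+$-centered, together with the Knaster property. Both are short and natural; yours is more self-contained, the paper's isolates a reusable ingredient (Knaster forcings add no new branches to $\kappa^+$-trees).
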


\begin{proof}
It is known that if $\kappa^{<\kappa} = \kappa$, then for every $\alpha \le 2^\kappa$, Cohen forcing $\Add(\kappa,\alpha)$ is $\kappa^+$-centered. We only need that $\Add(\kappa,\kappa^+)$ is $\kappa^+$-centered for our argument, but note that since $\GMP_{\kappa^{++}}$ implies $2^\kappa \ge\kappa^{++}$, longer Cohen forcings are $\kappa^+$-centered. 

Suppose for contradiction that $\Add(\kappa,\gamma)$ adds a weak Kurepa tree $T$ at $\kappa^+$ for some $\gamma$. Using the fact that $T$ has size $\kappa^+$, $T$ is added already by $\Add(\kappa,\kappa^+)$. By Theorem \ref{th:2}, $T$ has at most $\kappa^+$ cofinal branches in $V[\Add(\kappa,\kappa^+)]$. Since Cohen forcing is $\kappa^+$-Knaster, in particular its product with itself is $\kappa^+$-cc, it follows that Cohen forcing at $\kappa$ cannot add new cofinal branches to $T$ over $V[\Add(\kappa,\kappa^+)]$. This is a contradiction.
\end{proof}

If we additionally assume $\MA_{\omega_1}$, then ccc forcings are more well-behaved (for instance they are all Knaster). Moreover, if they have size at most $\omega_1$, they are even $\sigma$-centered (see \cite[Theorem 4.5]{Top:Weiss}), so we obtain the following Corollary.

\begin{corollary} \label{cor:MA}
$\neg \wKH(\omega_1)$ is preserved over models of $\GMP_{\omega_2} +  \MA_{\omega_1}$ by any ccc forcing of size $\omega_1$. In particular over models of $\PFA$, $\neg \wKH(\omega_1)$ is preserved by all ccc forcings of size $\omega_1$. 
\end{corollary}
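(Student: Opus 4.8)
The plan is to obtain this as an immediate consequence of Theorem \ref{th:2} and a classical fact about Martin's Axiom. First I would recall that $\MA_{\omega_1}$ implies that every ccc forcing notion of size at most $\omega_1$ is $\sigma$-centered; this is \cite[Theorem 4.5]{Top:Weiss}. In the terminology of Definition \ref{def:centered} with $\kappa = \omega$, being $\sigma$-centered is precisely being $\omega^+$-centered. The one point meriting a remark is that one wants the pieces $\P_n$ witnessing $\sigma$-centeredness to satisfy the stronger clause (\ref{eq:c1}) rather than merely (\ref{eq:c2}); should the textbook formulation yield only (\ref{eq:c2}), one passes to the Boolean completion $\mathrm{RO}(\P)$, where the two notions coincide by the remark following Definition \ref{def:centered} and which adds exactly the same weak $\omega_1$-Kurepa trees as $\P$.

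Now suppose $V \models \GMP_{\omega_2} + \MA_{\omega_1}$ and let $\P$ be a ccc forcing with $|\P| \le \omega_1$. Since $\omega^{<\omega} = \omega$, the principle $\GMP_{\omega_2}$ is precisely the instance of $\GMP_{\kappa^{++}}$ with $\kappa = \omega$, and $\P$ is $\omega^+$-centered by the previous paragraph; hence Theorem \ref{th:2} applies and yields that $\neg \wKH(\omega_1)$ holds in $V[\P]$. As $\P$ was arbitrary, this proves the first assertion.

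For the ``in particular'' clause, it suffices to note that every model of $\PFA$ satisfies $\GMP_{\omega_2} + \MA_{\omega_1}$: the first conjunct is Fact \ref{PFA}(ii), and the second holds because $\MA_{\omega_1}$ is simply the restriction of $\PFA$ to ccc (hence proper) posets. The first part then applies. There is no genuine obstacle in this argument — all the work is carried by Theorem \ref{th:2} and the cited characterization of ccc posets of size $\omega_1$ under $\MA_{\omega_1}$ — the only delicate point being the reconciliation of the two standard notions of $\sigma$-centeredness indicated above.
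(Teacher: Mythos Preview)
Your proof is correct and follows exactly the route the paper intends: the text immediately preceding the corollary already records that under $\MA_{\omega_1}$ every ccc forcing of size at most $\omega_1$ is $\sigma$-centered (citing \cite[Theorem 4.5]{Top:Weiss}), so the corollary is simply Theorem \ref{th:2} specialized to $\kappa=\omega$, together with Fact \ref{PFA}(ii) for the $\PFA$ clause. Your additional remark about reconciling clauses (\ref{eq:c1}) and (\ref{eq:c2}) via the Boolean completion is a legitimate point that the paper leaves implicit, and your handling of it is fine.
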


\section{An application}\label{sec:appl}

In this section, we sketch an application of the indestructibility result for the negation of the weak Kurepa Hypothesis to provide a proof of the consistency of $\neg \wKH(\aleph_{\omega+1})$. Familiarity with lifting arguments, as they appear for instance in \cite{ABR:tree}, \cite{HS:ind} or \cite{8fold}, is assumed.

Suppose $\kappa < \lambda$ are regular cardinals with $\kappa^{<\kappa} = \kappa$ and $\lambda$ inaccessible. The Mitchell forcing $\M(\kappa,\lambda)$ can be written as $\Add(\kappa,\lambda)*\dot{Q}$ for some quotient forcing $\dot{Q}$ which is forced to be $\kappa^+$-distributive (see \cite{ABR:tree} for more details). If $G$ is $\M(\kappa,\lambda)$-generic, we write $G_0* G_1$ to denote the corresponding $\Add(\kappa,\lambda)*\dot{Q}$-generic.

\begin{theorem}\label{th:3}
Suppose $\kappa < \lambda$ are supercompact cardinals and $\kappa$ is Laver-indestructibly supercompact. Let $G$ be $\M(\kappa,\lambda)$-generic, where $\M(\kappa,\lambda)$ is the Mitchell forcing. In $V[G]$, 
$\kappa$ is supercompact, $2^\kappa = \kappa^{++}$, and $\GMP_{\kappa^{++}}$ holds. Let $\Q$ be the Prikry forcing with interleaved collapses which turns $\kappa$ into $\aleph_{\omega}$. Suppose $F$ is $\Q$-generic over $V[G]$. Then in $V[G][F]$, the negation of the weak Kurepa Hypothesis holds at $\aleph_{\omega+1}$.
\end{theorem}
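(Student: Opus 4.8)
The plan is to combine the results of the previous sections, applied in $V[G]$, with the standard analysis of Prikry forcing with interleaved collapses. Since $\Q$ makes $\kappa$ into $\aleph_\omega$ and preserves all cardinals $\geq\kappa^+$ (see \cite{ABR:tree,HS:ind,8fold,HM:dest} for this and the other features of $\Q$ used below), in $V[G][F]$ we have $\aleph_{\omega+1}=(\kappa^+)^{V[G]}$ and $\aleph_{\omega+2}=(\kappa^{++})^{V[G]}$, so it suffices to show that $\Q$ forces $\neg\wKH(\kappa^+)$ over $V[G]$. In $V[G]$ the hypotheses of Sections~\ref{sec:GMP} and~\ref{sec:centered} are available: $\GMP_{\kappa^{++}}$ holds (Fact~\ref{f:GMP}), hence so do $\neg\wKH(\kappa^+)$ and $2^\kappa=\kappa^{++}$ (Lemma~\ref{lm:GMPimplies}), and by Theorem~\ref{th:2} the statement $\neg\wKH(\kappa^+)$ is preserved by every $\kappa^+$-centered forcing over $V[G]$ --- in particular by every forcing of size $<\kappa$, which is trivially $\kappa^+$-centered and which also preserves the supercompactness of $\kappa$ and the principle $\GMP_{\kappa^{++}}$. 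Recall finally the factorization of $\Q$: below a condition $q$ whose stem has largest entry $\mu<\kappa$, $\Q$ is forcing-equivalent to $R_q\x S_q$, where $R_q$ is a L\'evy-collapse-type forcing of size $<\kappa$ absorbing the coordinates of $\Q$ below $\mu$, and $S_q$ has the Prikry property and a direct-extension ordering $\le^*$ that is $\mu^+$-closed.

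Granting this, here is the argument I would run. Suppose $\dot T$ is forced by $1_\Q$ to be a weak $\kappa^+$-Kurepa tree; relabelling, assume its underlying set is forced to be $\kappa^+$, and fix $\Q$-names $\langle\dot b_\xi\mid\xi<\kappa^{++}\rangle$ for pairwise distinct cofinal branches. Pass below a condition $q$ with largest stem-entry $\mu$ and use the factorization: writing $F$, below $q$, as $h*g$ with $h$ being $R_q$-generic over $V[G]$ and setting $W:=V[G][h]$, it suffices to show that $S:=S_q$ forces $\neg\wKH(\kappa^+)$ over $W$; note that $\neg\wKH(\kappa^+)$, $2^\kappa=\kappa^{++}$, $\GMP_{\kappa^{++}}$ and the supercompactness of $\kappa$ still hold in $W$. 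I would now imitate the proof of Theorem~\ref{th:2}, with the $\kappa$-many \emph{stems} of $S$ playing the role of the centered pieces $\P_i$: fix in $W$ a $\kappa^+$-guessing model $N\el H(\theta)$ with $|N|=\kappa^+$, $\kappa^+\sub N$, ${}^{<\kappa}N\sub N$ and $S,\dot T\in N$ (so $\kappa^+\x\kappa^+\sub N$), and, using the Prikry property, attach to each $\dot b_\xi$ a cofinal branch $B_\xi$ of the $(\kappa^+,\kappa^+)$-system on $\kappa^+\x\kappa^+$ whose relations are read off from the stems of $S$ via direct extensions (the analogue, for $S$, of the derived system of Definition~\ref{def:derived}). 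One then checks, exactly as in the proof of Theorem~\ref{th:2}: (i) each $B_\xi$ is $(\kappa^+,N)$-approximated --- for $a\in N$ of size $\kappa$ the set $B_\xi\cap a$ is bounded in $B_\xi$ (as $\cf(\kappa^+)=\kappa^+>\kappa$), so it equals $\{x\in a : x<_i y\}$ for a single node $y\in B_\xi\sub N$ and a relation $<_i$ of the system, hence lies in $N$ --- so, $\kappa^+\x\kappa^+$ being a subset of $N$, in fact $B_\xi\in N$; and (ii) for $\xi$ in a set $I\sub\kappa^{++}$ of size $\kappa^{++}$ the branches $B_\xi$ are pairwise distinct, because a condition forcing $B_\xi=B_\eta$ would, after amalgamating the relevant witnessing conditions by means of the $\mu^+$-closure of $\le^*$ together with the Prikry property, force $\dot b_\xi=\dot b_\eta$. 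Items (i) and (ii) put $\kappa^{++}$ distinct elements into $N$, contradicting $|N|=\kappa^+$.

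The step I expect to be the crux is precisely the one where Section~\ref{sec:centered} cannot be quoted verbatim. Unlike the forcings there, $\Q$ and its tails $S$ are \emph{not} $\kappa^+$-centered: the ``direction functions'' of a Prikry-with-collapses condition give rise to antichains of size $\kappa^{++}$, so the relations obtained from the stems need not be transitive and there is no ``common lower bound inside a piece'' to appeal to. Recovering what is needed is where the factorization and the Prikry property do the work. The $R_q$-part is genuinely small and $\kappa^+$-centered, so it is handled by Theorem~\ref{th:2} directly (and a $\Delta$-system argument in the spirit of Theorem~\ref{preservation_theorem} shows that it also preserves the existence of suitable guessing models); within the tail $S$, every appeal to a common lower bound in the proof of Theorem~\ref{th:2} --- both the verification that $B_\xi\cap a$ is captured by $N$ and the amalgamation of the names $\dot b_\xi$ in~(ii) --- must be replaced by a direct extension, using the Prikry property and the $\mu^+$-closure of $\le^*$. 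This is the standard pattern of the lifting arguments over Mitchell-type models, and the plan would be to follow the treatments in \cite{ABR:tree}, \cite{HS:ind} and \cite{8fold}.
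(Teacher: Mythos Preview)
Your proposal is far more complicated than needed because it rests on a mistaken premise: you assert that $\Q$ is not $\kappa^+$-centered, but the version of Prikry forcing with interleaved collapses used here \emph{is} $\kappa^+$-centered. The point is that $\Q$ is defined using a \emph{guiding generic} (this is why one needs $2^\kappa = \kappa^{++}$ and the decomposition $V[G] = V[G_0][G_1]$ to secure its existence; see \cite[Lemma~4.1]{8fold}). With a guiding generic, the upper parts of any two conditions with the same stem can always be amalgamated: the measure-one sets are intersected and the constraint functions are made compatible through the guiding generic. Hence compatibility of conditions in $\Q$ depends only on the stem, and since there are only $\kappa$ many stems, $\Q$ is a union of $\kappa$ many pieces each centered in the strong sense of Definition~\ref{def:centered}. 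The paper's proof is then a single line: apply Theorem~\ref{th:2} directly.

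Your concern about direction functions generating antichains of size $\kappa^{++}$ applies to versions of the collapse-Prikry forcing without a guiding generic, not to the one used here. The factorization $R_q \times S_q$, the Prikry-property argument, and the reworking of the derived-system proof are therefore all unnecessary. Your workaround also carries gaps of its own: you assert that $R_q$, being of size $<\kappa$, preserves $\GMP_{\kappa^{++}}$, but nothing in the paper establishes this (Corollary~\ref{cor:GMP} covers only Cohen forcing at $\kappa$), and your treatment of the non-transitivity of the stem relations --- precisely the place where the centered hypothesis is used in the proof of Theorem~\ref{th:2} --- remains only a sketch.
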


\begin{proof}
For the properties of the model $V[G]$, see Fact \ref{f:GMP}. With regard to the forcing $\Q$: $\Q$ is defined with respect to some guiding generic whose existence follows from the facts that $\kappa$ is still $\kappa^+$-supercompact in $V[G]$, $2^\kappa = \kappa^{++}$ in $V[G]$, and $V[G]$ can be written as $V[G_0][G_1]$, where $G_0$ is $\Add(\kappa,\lambda)$-generic. See \cite[Lemma 4.1]{8fold} for more details. Since the compatibility of conditions in $\Q$ depends only on the stems, $\Q$ is $\kappa^+$-centered. Then the theorem follows by Theorem \ref{th:2}.
\end{proof}

\brm
By an argument using a quotient analysis in \cite[Lemma 4.6]{8fold}, the tree property holds at $\aleph_{\omega+2}$ in the model $V[G][F]$.
\erm

If $\kappa$ is not turned into $\aleph_\omega$, but only singularised (to an arbitrary cofinality), we can apply an indestructibility result also for the tree property, following our \cite{HS:ind}.

\begin{theorem}\label{th:4}
Suppose $\kappa < \lambda$ are supercompact cardinals and $\kappa$ is Laver-indestructibly supercompact. Let $G$ be $\M(\kappa,\lambda)$-generic, where $\M(\kappa,\lambda)$ is the Mitchell forcing. In $V[G] = V[G_0][G_1]$, $\kappa$ is supercompact, $2^\kappa = \kappa^{++}$, and $\GMP_{\kappa^{++}}$ holds. Let $\Q$ be the Prikry or Magidor forcing which turns $\kappa$ into a singular cardinal without collapsing any cardinals; choose $\Q$ to be an element of $V[G_0]$.\footnote{This is possible; see the construction in \cite{HS:u} for more details.} Suppose $F$ is $\Q$-generic over $V[G]$. Then in $V[G][F]$, the negation of the weak Kurepa Hypothesis holds at $\kappa^+$ and the tree property holds at $\kappa^{++}$.
\end{theorem}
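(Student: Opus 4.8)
The plan is to establish the two conclusions separately. For $\neg\wKH(\kappa^+)$ in $V[G][F]$ I would argue exactly as in the proof of Theorem~\ref{th:3}: by Fact~\ref{f:GMP}, $\GMP_{\kappa^{++}}$ holds in $V[G]$, and $\kappa^{<\kappa}=\kappa$ there since $\kappa$ is inaccessible in $V[G]$; moreover $\Q$ is $\kappa^+$-centered, since the compatibility of its conditions depends only on their stems, of which there are only $\kappa$ many in $V[G]$. Applying Theorem~\ref{th:2} inside $V[G]$ shows that $\neg\wKH(\kappa^+)$ is preserved by $\Q$, and since $\Q$ collapses no cardinals, $\kappa^+$ is computed the same way in $V[G]$ and $V[G][F]$, so $\neg\wKH(\kappa^+)$ holds in $V[G][F]$.

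For $\TP(\kappa^{++})$ in $V[G][F]$, first note that $\kappa^{++}$ of $V[G][F]$ equals $\lambda$, since $\M(\kappa,\lambda)$ turns $\lambda$ into $\kappa^{++}$ and $\Q$ preserves cardinals. The key preliminary is a rearrangement of the iteration: writing $\M(\kappa,\lambda)=\Add(\kappa,\lambda)*\dot Q$ with $\dot Q$ forced to be $\kappa^+$-distributive, and using that $\Q$ is (a name for) an element of $V[G_0]$ — which is possible because $\Add(\kappa,\lambda)$ is $<\kappa$-directed closed, so Laver indestructibility keeps $\kappa$ supercompact, hence measurable, in $V[G_0]$ — the product forcing lemma yields that $G_1\times F$ is $(Q\times\Q)$-generic over $V[G_0]$. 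Thus $V[G][F]=V[G_0][F][G_1]$, with $G_1$ being $Q$-generic over $V[G_0][F]$, so $V[G][F]$ is obtained from $V$ by $\Add(\kappa,\lambda)$, then $\Q$, then the $\kappa^+$-distributive quotient. One then runs the lifting argument of \cite{HS:ind}: fix $j\colon V\to N$ with $\mathrm{crit}(j)=\lambda$ witnessing sufficient supercompactness of $\lambda$, note that $\Add(\kappa,\lambda)*\dot\Q$ and the quotient have size $\lambda$ and the relevant chain conditions so that $j$ restricts to them in the expected way, and construct inside $V[G][F]$ a generic for the portion of $j(\M(\kappa,\lambda)*\dot\Q)$ lying above $\lambda$, using the closure of that tail (after a routine correction) together with the closure of $N$ in $V[G][F]$. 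This lifts $j$ to $\bar j\colon V[G][F]\to N[\bar G]$; given any $\lambda$-tree $T\in V[G][F]$, a node on level $\lambda$ of $\bar j(T)$ restricts to a cofinal branch of $T$ lying in $N[\bar G]$, hence in $V[G][F]$ by closure, so $\TP(\kappa^{++})$ holds in $V[G][F]$.

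I expect the tree-property half to be the main obstacle, and within it the lifting of $j$ through the extra Prikry/Magidor factor $\Q$: since $|U|=2^\kappa=\lambda$ in $V[G_0]$, the embedding $j$ does not map the normal measure $U$ defining $\Q$ onto $j(U)$, so a master condition (or generic) for the image forcing at the step corresponding to $\Q$ must be produced directly. This is precisely the situation handled in \cite{HS:ind}, whose argument applies here because $\Q$ was chosen in $V[G_0]$ and is $\kappa^+$-cc; the remaining points — the product rearrangement above, the chain-condition bookkeeping, and cardinal preservation — are routine given that reference and the discussion preceding Theorem~\ref{th:3}.
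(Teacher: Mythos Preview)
Your proposal is correct and follows the same approach as the paper. The paper's proof is extremely terse: for $\neg\wKH(\kappa^+)$ it simply says ``as in Theorem~\ref{th:3} (but easier, since no guiding generic is needed)'', and for $\TP(\kappa^{++})$ it just observes that $\Q$ lives in $V[G_0]$ and invokes the indestructibility result from \cite{HS:ind} as a black box. Your write-up unpacks the latter citation by sketching the rearrangement $V[G][F]=V[G_0][F][G_1]$ and the lifting argument, and correctly identifies where the work lies, but this is exactly the content of \cite{HS:ind} that the paper is appealing to.
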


\begin{proof}
The part regarding the negation of the weak Kurepa Hypothesis is as in Theorem \ref{th:3} (but it is easier since no guiding generic needs to be constructed), and the tree property holds because the forcing $\Q$ lives in $V[G_0]$, so the indestructibility result from \cite{HS:ind} applies.
\end{proof}

Let us summarize the arguments in this section succinctly as follows:

\begin{corollary}\label{cor:o}
Assuming the consistency of the existence of two supercompact cardinals, it is consistent that $\neg \wKH(\aleph_{\omega+1}) + \TP(\aleph_{\omega+2})$ holds.
\end{corollary}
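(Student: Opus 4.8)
The plan is to combine the consistency of two supercompact cardinals with the two theorems just established, so the proof is essentially a bookkeeping argument. First I would invoke the standard preparation: starting from two supercompact cardinals $\kappa < \lambda$, a preliminary Laver-style forcing makes $\kappa$ indestructibly supercompact while preserving the supercompactness of $\lambda$ (and we may also arrange $\kappa^{<\kappa} = \kappa$). This puts us exactly in the situation of the hypotheses of Theorem \ref{th:3} (equivalently Theorem \ref{th:4}), so the consistency strength assumption is met.

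Next I would apply Theorem \ref{th:3} directly. By that theorem, if $G$ is $\M(\kappa,\lambda)$-generic and $F$ is generic for the Prikry forcing with interleaved collapses $\Q$ turning $\kappa$ into $\aleph_\omega$, then in $V[G][F]$ the negation of the weak Kurepa Hypothesis holds at $\aleph_{\omega+1} = \kappa^+$. This gives the first conjunct $\neg \wKH(\aleph_{\omega+1})$. For the second conjunct $\TP(\aleph_{\omega+2})$, I would appeal to the remark immediately following Theorem \ref{th:3}: by the quotient analysis of \cite[Lemma 4.6]{8fold}, the tree property holds at $\aleph_{\omega+2} = \kappa^{++}$ in the same model $V[G][F]$. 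Hence both statements hold simultaneously in $V[G][F]$, and since this is a set-forcing extension of a model obtained by set forcing over a model of $\ZFC$ plus two supercompacts, the corollary follows.

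There is essentially no hard step left, since all the genuine work is packaged into Theorems \ref{th:3} and \ref{th:2} and the cited lemma from \cite{8fold}; the only point requiring a word of care is that the model $V[G][F]$ witnessing $\neg \wKH(\aleph_{\omega+1})$ (via Theorem \ref{th:3}, using the $\kappa^+$-centeredness of $\Q$) is the \emph{same} model in which the quotient argument yields $\TP(\aleph_{\omega+2})$ — that is, the two arguments are compatible because both are carried out over the one generic extension $V[G]$ by the one forcing $\Q$. I would spell this out in a single sentence, noting that the Prikry forcing with interleaved collapses is $\kappa^+$-centered (compatibility of conditions depending only on stems) so Theorem \ref{th:2} applies to $\Q$ over $V[G]$, while the quotient analysis for the tree property at $\kappa^{++}$ is exactly the content of the cited remark. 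The main (and only) obstacle, if any, is keeping track of which cardinal plays which role after $\kappa$ is singularised to $\aleph_\omega$, but this is routine once one recalls that $\Q$ preserves $\kappa^+$ and $\kappa^{++}$.
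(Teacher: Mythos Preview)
Your proposal is correct and matches the paper's approach: the corollary is stated in the paper simply as a summary of the preceding arguments, with $\neg\wKH(\aleph_{\omega+1})$ coming from Theorem~\ref{th:3} and $\TP(\aleph_{\omega+2})$ from the remark immediately following it (the quotient analysis of \cite[Lemma~4.6]{8fold}), both in the same model $V[G][F]$. One minor slip: Theorem~\ref{th:4} is not ``equivalent'' here, since that theorem does not collapse below $\kappa$ and so $\kappa$ does not become $\aleph_\omega$; but you correctly apply Theorem~\ref{th:3} in the actual argument, so this does not affect the proof.
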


\brm
The large cardinal assumption in Corollary \ref{cor:o} can be substantially reduced if we calculate more carefully. Firstly, it is easy to see that it suffices to start only with one supercompact cardinal and a weakly compact cardinal above it: for $\neg \wKH(\kappa^+)$ only $\GMP_{\kappa^{++}}(\kappa^{++})$ is required as we mentioned just before Remark \ref{rm:more}, and a weakly compact cardinal is sufficient for this, as is also for $\TP(\aleph_{\omega+2})$ in \cite{8fold}. Secondly, one can use an indestructibility result provable for strongness-type cardinals (see \cite{RH:Laver}) and then follow an argument from \cite{FHS2:large}: this way, the large cardinal assumptions are reduced  to an $H(\lambda^+)$-strong cardinal $\kappa$, where $\lambda > \kappa$ is weakly compact, and this is almost optimal.
\erm

\section{Open questions} \label{sec:open}

\begin{enumerate}[(1)]
\item Can we extend the preservation result for $\TP(\kappa^{++})$ to all $\kappa^+$-centered forcings? It seems that our method in Theorem \ref{th:2} does not generalize to the tree property. The syntactical difference between the two principles may be important here: while $\wKH(\omega_1)$ is a $\Sigma_1$ sentence, $\neg \TP(\omega_2)$ is a $\Sigma_2$ sentence. This is also relevant for question (\ref{q:zap}) mentioned below.

\medskip

\noindent Or more ultimately, can the preservation result under $\GMP_{\kappa^{++}}$ be extended to all $\kappa^+$-cc forcings? To our knowledge there is no known counterexample.

\medskip 

\item Is the assumption of $\GMP_{\kappa^{++}}$ necessary for the preservation results? To our knowledge it is still open whether, for instance, there may be a model which satisfies $\TP(\omega_2)$ and/or $\neg \wKH(\omega_1)$, and over which a single Cohen forcing at $\omega$ adds an $\omega_2$-Aronszajn tree and/or a weak Kurepa tree at $\omega_1$.

\medskip

\item \label{q:zap} Zapletal observed that $\MA_{\omega_2}$ implies that $\neg \wKH(\omega_1)$ is preserved by all ccc forcings: if $\P$ is a ccc forcing notion and $\dot{T}$ is a $\P$-name for a weak $\omega_1$-Kurepa tree, then with $\MA_{\omega_2}$ one can use $\dot{T}$ to define a weak Kurepa tree back in $V$. However, it is not known whether $\MA_{\omega_2}$ is consistent with $\neg \wKH(\omega_1)$.

\medskip

\noindent It is known that $\MA_{\omega_2} + \neg \KH(\omega_1)$ is consistent due to a result of Jensen and Schlechta \cite{JS:k} who showed that starting with $\CH$, in the Levy collapse by countable conditions of a Mahlo cardinal $\kappa$ to $\omega_2$, $\neg \KH(\omega_1)$ is preserved by all ccc forcings. In particular, it is is preserved by any ccc finite-support iteration which forces $\MA_{\omega_2}$ over this model. (We do not know an easier proof of the consistency of $\MA_{\omega_2} + \neg \KH(\omega_1)$.)

\medskip

\noindent We can ask whether there is an analogous construction for $\neg \wKH(\omega_1)$: can one collapse a large cardinal $\kappa$ to become $\omega_2$ with $2^\omega \ge \omega_2$ and obtain a model over which $\neg \wKH(\omega_1)$ is preserved by all ccc forcings? Note that this line of argument cannot be used with $\PFA$ because $\MA_{\omega_2}$ implies $2^\omega > \omega_2$. More generally, if $\MA_{\theta}$ holds for a regular $\theta$, then we can only argue that if $\dot{T}$ is a $\P$-name for a weak Kurepa tree at $\omega_1$ with at least $\theta$ cofinal branches, there is one in $V$ with at least $\theta$ cofinal branches.

\medskip

\item Is it possible to strengthen Theorem \ref{th:2} along the lines of Corollary \ref{cor:GMP} and show that not only $\neg \wKH(\kappa^+)$, but the principle $\GMP_{\kappa^{++}}$ itself is preserved by all $\kappa^+$-centered forcings? Or perhaps by all $\kappa^+$-cc forcings?

\end{enumerate}

\bibliographystyle{amsplain}
\bibliography{mybiblio}
\end{document}